\newlength{\defbaselineskip}
\theoremstyle{plain}
\newtheorem{thm}{Theorem}
\newtheorem{lem}[thm]{Lemma}
\newtheorem{cor}[thm]{Corollary}
\newtheorem{prop}[thm]{Proposition}
\newtheorem{conj}[thm]{Conjecture}
\theoremstyle{definition}
\newtheorem{defn}[thm]{Definition}
\newtheorem{defnprop}[thm]{Definition-Proposition}
\newtheorem{exmp}[thm]{Example}
\newtheorem{remark}[thm]{Remark}
\newtheorem{question}[thm]{Question}
\numberwithin{equation}{section}
\def\C{{\mathbb C}}
\def\N{{\mathbb N}}
\def\eg{\medskip\noindent \textit{Example: }}
\def\bmx{\begin{bmatrix}}
\def\emx{\end{bmatrix}}
\newcommand{\boxs}[1]
{ \multiput(#1)(10,0){2}
 {\line(0,10){10}}
\multiput(#1)(0,10){2}
 {\line(10,0){10}}
}
\address{Department of Mathematics, Purdue University, West Lafayette, IN 47907}
\email{{\tt kyungl@purdue.edu}}
\thanks{Research of the first author partially supported by NSF grant DMS
0901367}
\address{Department of Mathematics, University of Illinois at Urbana Champaign, Urbana, IL 61801}
\email{{\tt llpku@math.uiuc.edu}}
\begin{document}
\title[Diagonal ideal]{Notes on a minimal set of generators for the radical ideal defining the diagonal locus of $(\C^2)^n$}
\author{Kyungyong Lee and Li Li}
\maketitle

\begin{abstract}
We develop several techniques for the study of the radical ideal $I$ defining the diagonal locus of $(\C^2)^n$. Using these techniques, we give combinatorial construction of generators for $I$ of certain bi-degrees.
\end{abstract}

\section{Introduction}

\subsection{Overview}
Fix a positive integer $n$. Consider $n$-tuples of ordered points $\{(x_i,y_i)\}_{1\le i\le n}$ in the plane $\C^2$. The set of all $n$-tuples forms
an affine space $(\C^2)^n$ with coordinate ring $\C[\textbf{x},\textbf{y}]=\C[x_1,y_1,...,x_n,y_n]$. The symmetric group $S_n$ acts on
$\mathbb{C}[\textbf{x},\textbf{y}]$ by permuting the coordinates in $\textbf{x},\textbf{y}$ simultaneously, that is,
$$
\sigma(x_{j}):=x_{\sigma(j)},\quad \sigma(y_{j}):=y_{\sigma(j)}\quad\text{ for }\sigma \in S_n.
$$
\begin{defn}
A polynomial $f \in \mathbb{C}[\textbf{x},\textbf{y}]$ is called \emph{alternating} if
$$
\sigma(f)=\text{sgn}(\sigma)f \,\,\,\,\,\text{\label{definitionofstaircaseform}for all }\sigma \in S_n.
$$
Define $\mathbb{C}[\textbf{x},\textbf{y}]^{\epsilon}$ to be the vector space of alternating polynomials in  $\mathbb{C}[\textbf{x},\textbf{y}]$.
\end{defn}

The vector space $\mathbb{C}[\textbf{x},\textbf{y}]^{\epsilon}$ has a well-known basis which we describe below.
Denote by $\N$ the set of nonnegative integers. Let $\mathfrak{D}$ be the set of subsets $D=\{(\alpha_1,\beta_1),...,(\alpha_n,\beta_n)\}$ of $\N\times\N$. For $D\in\mathfrak{D}$, define
$$\Delta(D):=
\det\begin{bmatrix}
     x_1^{\alpha_1}y_1^{\beta_1}&x_1^{\alpha_2}y_1^{\beta_2}&...&x_1^{\alpha_n}y_1^{\beta_n}\\
      \vdots&\vdots &\ddots &\vdots\\
    x_n^{\alpha_1}y_n^{\beta_1}&x_n^{\alpha_2}y_n^{\beta_2}&...&x_n^{\alpha_n}y_n^{\beta_n}\\
   \end{bmatrix}
$$ (by abuse of notation we also use $\Delta(D)$ to denote the above square matrix).
Then $\{\Delta(D)\}_{D\in\mathfrak{D}}$ forms a basis
for the $\C$-vector space $\mathbb{C}[\textbf{x},\textbf{y}]^{\epsilon}$.

The radical ideal  $I$ that defines the diagonal locus of $(\C^2)^n$ is
$$I=\bigcap_{1\leq i<j\leq n}(x_i-x_j, y_i-y_j).$$
A famous theorem of Haiman asserts the following:
\begin{thm}\emph{\cite[Corollary 3.8.3]{H:hil}}
The ideal generated by the alternating polynomials in $\mathbb{C}[\textbf{x},\textbf{y}]$ agrees with $I$.
\end{thm}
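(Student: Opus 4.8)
Write $J\subseteq\C[\textbf{x},\textbf{y}]$ for the ideal generated by the alternating polynomials; the goal is $J=I$. The inclusion $J\subseteq I$ is formal: if $f$ is alternating and $P=(P_1,\dots,P_n)\in(\C^2)^n$ has $P_i=P_j$ for some $i\ne j$, then the transposition $\sigma=(i\,j)$ fixes $P$, so $f(P)=(\sigma f)(P)=\operatorname{sgn}(\sigma)\,f(P)=-f(P)$, whence $f(P)=0$; thus every alternator vanishes on the diagonal locus, whose full ideal is the reduced ideal $I$, so $f\in I$. For the opposite set-theoretic inclusion $V(J)\subseteq V(I)$: if $P_1,\dots,P_n$ are pairwise distinct, Lagrange interpolation gives $g_1,\dots,g_n\in\C[\textbf{x},\textbf{y}]$ with $g_k(P_m)=\delta_{km}$, and expanding $1=\det\big(g_k(P_m)\big)$ by multilinearity over the monomials occurring in the $g_k$ produces a $D\in\mathfrak D$ with $\Delta(D)(P)\ne0$, so $P\notin V(J)$. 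Hence $\sqrt J=I$, and the theorem becomes the assertion that $J$ is radical.

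For that it suffices to show $J$ has no embedded associated primes. The minimal primes over $J$ are exactly the codimension-two primes $P_{ij}:=(x_i-x_j,\,y_i-y_j)$ (since $V(J)=V(I)$); if there are no embedded primes then $J=\bigcap_{i<j}Q_{ij}$ with each $Q_{ij}$ being $P_{ij}$-primary, and localizing at the minimal prime $P_{ij}$ leaves $J_{P_{ij}}=(Q_{ij})_{P_{ij}}$. On the other hand the Vandermonde alternators $\prod_{k<l}(x_k-x_l)$ and $\prod_{k<l}(y_k-y_l)$ lie in $J$, and after localizing at $P_{ij}$ the linear factors other than $x_i-x_j$, resp.\ $y_i-y_j$, are units, so these elements generate $(x_i-x_j)_{P_{ij}}$ and $(y_i-y_j)_{P_{ij}}$; thus $J_{P_{ij}}=(P_{ij})_{P_{ij}}$, which forces $Q_{ij}=P_{ij}$ and $J=\bigcap_{i<j}P_{ij}=I$. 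So the whole content is the unmixedness of $J$; showing $\C[\textbf{x},\textbf{y}]/J$ to be Cohen--Macaulay would be more than enough.

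I see no elementary approach to this unmixedness and would follow Haiman's route through the Hilbert scheme. Let $H_n=\operatorname{Hilb}^n(\C^2)$ and let $X_n$ be the isospectral Hilbert scheme, the reduced fiber product of $(\C^2)^n$ and $H_n$ over $\operatorname{Sym}^n\C^2$. Haiman identifies $X_n$ with the blow-up $\operatorname{Proj}\bigoplus_{d\ge0}J^d$ of $(\C^2)^n$ along $J$, so that the local structure of $X_n$ records $J$ and its powers. The decisive input is Haiman's \emph{polygraph theorem}: writing $Z(n,l)\subseteq(\C^2)^n\times(\C^2)^l$ for the polygraph, the union over all $n^l$ functions $[l]\to[n]$ of the graphs of the corresponding maps $(\C^2)^n\to(\C^2)^l$, the coordinate ring of $Z(n,l)$ is a free $\C[\textbf{x},\textbf{y}]$-module. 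From this Haiman deduces that $X_n$ is normal, Cohen--Macaulay and Gorenstein, and --- the step that yields the theorem --- that $J$ is itself unmixed of pure codimension two; by the previous paragraph, $J=I$.

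The main obstacle is everything beyond the first paragraph, and the single hardest ingredient is the polygraph theorem, whose proof is an intricate induction on the pair $(n,l)$: one orders the graphs making up $Z(n,l)$, studies the short exact sequences relating the coordinate rings of successive partial unions and of the pairwise intersections, and propagates freeness along them, the intersections being reduced, via a change of coordinates, to smaller polygraphs. By contrast the implications ``polygraph freeness $\Rightarrow$ $X_n$ normal and Cohen--Macaulay $\Rightarrow$ $J$ unmixed'' are comparatively formal commutative algebra and descent, so I would organize the exposition so as to isolate the polygraph theorem as the one genuinely new component.
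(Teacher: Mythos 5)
Your reductions are correct, and the paper itself offers no proof of this statement: it is quoted verbatim from Haiman (\cite[Corollary 3.8.3]{H:hil}), whose argument proceeds exactly as you outline --- the easy inclusion and the set-theoretic equality are elementary, the localization at each $P_{ij}$ via the two Vandermonde alternants reduces everything to unmixedness of the alternating ideal, and that unmixedness is extracted from the Cohen--Macaulay/Gorenstein property of the isospectral Hilbert scheme, which rests on the polygraph theorem. So your proposal follows essentially the same route as the cited source, with the honest caveat (which you state) that the polygraph theorem itself is the real content and is not reproved here.
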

Haiman's theorem immediately implies that the ideal $I$ is generated by $\{\Delta(D)\}_{D\in \mathfrak{D}}$. He has also proved the following theorem, which asserts that the number of minimal generators of $I$ is equal to the $n$-th Catalan number.
\begin{thm}\emph{(\cite[p393]{van})}
$\dim_{\C}\,I/(\mathbf{x},\mathbf{y})I= \displaystyle\frac{1}{n+1}{2n\choose n}$.
\end{thm}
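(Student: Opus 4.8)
The plan is to deduce the theorem from Haiman's theorem on the ring of diagonal coinvariants. Write $A=\C[\textbf{x},\textbf{y}]$, let $R=A^{S_n}$ be the ring of diagonal invariants, let $\mathfrak{m}=(\textbf{x},\textbf{y})=A_{+}$ be the irrelevant ideal, and retain the notation $A^{\epsilon}=\C[\textbf{x},\textbf{y}]^{\epsilon}$. By Haiman's theorem quoted above, $I=A\cdot A^{\epsilon}$, and since everything in sight is bigraded, a graded Nakayama argument identifies $\dim_{\C}I/\mathfrak{m}I$ with the minimal number of bihomogeneous generators of $I$. I would prove the chain of isomorphisms
\[
I/\mathfrak{m}I\;\cong\;A^{\epsilon}/(R_{+}A^{\epsilon})\;\cong\;(DR_n)^{\epsilon},
\]
where $DR_n:=A/(R_{+}A)$ is the ring of diagonal coinvariants and $(DR_n)^{\epsilon}$ is its sign-isotypic component, and then read off $\dim_{\C}(DR_n)^{\epsilon}$ from Haiman's character formula for $DR_n$.

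First I would establish $I/\mathfrak{m}I\cong A^{\epsilon}/(R_{+}A^{\epsilon})$. Since $A_{+}A=A_{+}$, we have $\mathfrak{m}I=A_{+}\cdot A\cdot A^{\epsilon}=A_{+}A^{\epsilon}$, so $I/\mathfrak{m}I=(AA^{\epsilon})/(A_{+}A^{\epsilon})$; splitting each coefficient of an element of $AA^{\epsilon}$ into its constant term plus an element of $A_{+}$ shows that $A^{\epsilon}\hookrightarrow AA^{\epsilon}$ induces a surjection $A^{\epsilon}\twoheadrightarrow I/\mathfrak{m}I$ with kernel $A^{\epsilon}\cap A_{+}A^{\epsilon}$. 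The inclusion $R_{+}A^{\epsilon}\subseteq A^{\epsilon}\cap A_{+}A^{\epsilon}$ is clear because a symmetric polynomial times an alternating one is alternating. For the reverse inclusion, apply the sign-averaging Reynolds operator $\operatorname{Alt}=\tfrac1{n!}\sum_{\sigma\in S_n}\operatorname{sgn}(\sigma)\,\sigma$ to a relation $a=\sum_i p_i b_i$ with $a\in A^{\epsilon}$, $p_i\in A_{+}$, $b_i\in A^{\epsilon}$: from $\operatorname{Alt}(a)=a$ and $\operatorname{Alt}(p_i b_i)=\bigl(\tfrac1{n!}\sum_{\sigma}\sigma(p_i)\bigr)b_i=\operatorname{Sym}(p_i)\,b_i$ one gets $a=\sum_i\operatorname{Sym}(p_i)\,b_i\in R_{+}A^{\epsilon}$, since $\operatorname{Sym}(p_i)$ is symmetric with vanishing constant term. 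Hence $I/\mathfrak{m}I\cong A^{\epsilon}/(R_{+}A^{\epsilon})$, which, again by graded Nakayama (now over $R$, using that $A^{\epsilon}$ is a finitely generated $R$-module), is the minimal number of generators of the $R$-module $A^{\epsilon}$.

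Next, the same $\operatorname{Alt}$ computation gives $A^{\epsilon}\cap R_{+}A=R_{+}A^{\epsilon}$, so the image of $A^{\epsilon}$ in $DR_n=A/(R_{+}A)$ is $(DR_n)^{\epsilon}\cong A^{\epsilon}/(R_{+}A^{\epsilon})$; thus $\dim_{\C}I/\mathfrak{m}I$ is the multiplicity of the sign character in the bigraded $S_n$-module $DR_n$. Finally I would invoke Haiman's theorem on the diagonal coinvariants: the bigraded Frobenius characteristic of $DR_n$ is $\nabla e_n$, and $\langle\nabla e_n,\,s_{1^n}\rangle$ is the $(q,t)$-Catalan number, whose value at $q=t=1$ is $\tfrac1{n+1}\binom{2n}{n}$. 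Only the sign-isotypic part is actually needed, i.e.\ Haiman's result that $\dim_{\C}(DR_n)^{\epsilon}=\tfrac1{n+1}\binom{2n}{n}$; combining it with the previous paragraphs gives $\dim_{\C}I/\mathfrak{m}I=\tfrac1{n+1}\binom{2n}{n}$.

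The reductions above are elementary, so the entire weight of the theorem rests on the last step, Haiman's theorem on $DR_n$, whose proof runs through a vanishing theorem for the isospectral Hilbert scheme of $n$ points in the plane; that is the real obstacle. If instead one sought a proof internal to the polynomial ring, the main difficulty would be to exhibit an explicit $\C$-basis of $A^{\epsilon}/(R_{+}A^{\epsilon})$ of cardinality $\tfrac1{n+1}\binom{2n}{n}$ --- for instance a family of determinants $\Delta(D)$ attached to Dyck-path-shaped diagrams $D$ --- and to prove its linear independence modulo $R_{+}A^{\epsilon}$; spanning is comparatively soft, but the independence is where all the difficulty would concentrate, being essentially as deep as Haiman's theorem.
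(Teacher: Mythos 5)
Your proof is correct: the graded Nakayama step, the Reynolds-operator identifications $I/\mathfrak{m}I\cong A^{\epsilon}/(R_{+}A^{\epsilon})\cong (DR_n)^{\epsilon}$, and the final appeal to Haiman's computation of the sign-isotypic part of the diagonal coinvariants are all sound. The paper gives no proof of this statement at all—it is quoted directly from Haiman's vanishing-theorems paper—and your derivation is precisely the standard reduction underlying that citation, with the entire depth residing, as you note, in Haiman's theorem.
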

Let $M=I/(\mathbf{x},\mathbf{y})I$. The space $M$ is doubly graded as $M=\oplus_{d_1,d_2}M_{d_1,d_2}$. The $t,q$-analog of the Catalan number is defined as
$$C_n(q,t)=\sum_{d_1,d_2}t^{d_1}q^{d_2}\dim M_{d_1,d_2}.$$
By a simple algebraic argument, giving a minimal set of generators of $I$ is equivalent to giving a basis of $M$, and it suffices to find bases for all graded pieces $M_{d_1,d_2}$.  It is then natural to ask the following question:

\begin{question}\label{question}
Given a bi-degree $(d_1,d_2)$, is there a combinatorially significant construction of the basis for each $M_{d_1,d_2}$?
\end{question}

On one hand, a combinatorial study of $C_n(q,t)$ gives us hints to construct an explicit basis for $M_{d_1,d_2}$; on the other hand, a good understanding of $M_{d_1,d_2}$ helps to study $C_n(q,t)$. For example, if we can show that some of the combinatorially significant elements in $M_{d_1,d_2}$ are linearly independent, then we can give lower or upper bounds for the coefficients appeared in $C_n(q,t)$. This idea is developed further in our subsequent paper \cite{LL2}.

Of course Question \ref{question} is vague and only gives the guideline of study. Here is one way to make it precise, which is one motivation for us to study Question \ref{question}.
Define
$$ \Lambda:=\{\lambda=(\lambda_1,\dots,\lambda_n)\,|\, \lambda_1\ge\lambda_2\cdots\ge\lambda_{n-1}\ge\lambda_n=0,\quad \lambda_i\le n-i, \forall 1\le i\le n\},
$$
$$\aligned
 &a(\lambda):=\sum_{i=1}^n(n-i-\lambda_i),\quad\forall \lambda\in\Lambda,\\
&b(\lambda):=\#\{(i,j)\,|\, i<j, \, \lambda_i-\lambda_j+i-j\in\{0,1\}\}, \quad\forall \lambda\in\Lambda,\endaligned$$
A surprising combinatorial interpretation for $C_n(q,t)$ found by Garsia and Haglund (\cite{GH:pos}, \cite{GH:proof}) asserts that $$C_n(q,t)=\sum_{\lambda\in\Lambda}q^{a(\lambda)}t^{b(\lambda)}.$$
It is then natual to ask if there is actually an explicit construction of the basis of $M_{d_1,d_2}$ hidden behind the above combinatorial interpretation. Indeed, a more specified question is posed by Haiman (\cite{H04}):

Is there a rule to associate to each $\lambda\in\Lambda$ an element $D(\lambda)\in\mathfrak{D}$ such that $\deg_\textbf{x}\Delta({D(\lambda)})=a(\lambda)$, $\deg_\textbf{y}\Delta({D(\lambda)})=b(\lambda)$, and the set $\{\Delta({D(\lambda)})\}_{\lambda\in\Lambda}$ generates $I$?

\subsection{Techniques and main result}
In the study of above questions, we found the following three linear relations that turn the questions into combinatorial games, and lead to a combinatorial construction of bases of $M_{d_1,d_2}$ for certain bi-degrees $(d_1,d_2)$. First we introduce some notations.

$\bullet$ For $D=\{P_1,\dots,P_n\}\in\mathfrak{D}$ where $P_i=(\alpha_i,\beta_i)$, define $|P_i|=\alpha_i+\beta_i$.

$\bullet$ For two homogeneous polynomials $f,g\in I$ of the same degree $d$, we denote $$f\sim g$$ if $f$ and $g$ are equivalent modulo the ideal $I_{<d}$. In other words, under the quotient map $I\to M=I/(\textbf{x},\textbf{y})I$, the image of $f$ equals the image of $g$.

\noindent\textit{Relation 1 (Transfactor Lemma \ref{lem:transfactor})}.
Given positive integers $1\le i\neq j\le n$ such that  $|P_i|=i-1$, $|P_{i+1}|=i$, $|P_j|=j-1$, $|P_{j+1}|=j$, $\beta_i>0$, $\alpha_j>0$   (we assume $|P_{n+1}|=n$).
Let $D'$ be obtained from $D$ by moving $P_i$ to southeast and $P_j$ to northwest, i.e.
$$D'=\{P_1, \dots, P_{i-1},
P_i+(1,-1),P_{i+1},\dots, P_{j-1},
P_j+(-1,1),P_{j+1},\dots,P_n\}.$$
Then $\Delta(D)\sim \Delta(D')$.

\eg $n=9, i=2, j=6$.  $$\setlength{\unitlength}{1.5pt}
    \begin{picture}(120,30)(0,-5)
    \put(-20,5){$D=$}
    \put(0,0){\circle*{5}}
    \put(0,10){\circle*{5}}\put(0,20){\circle*{5}}\put(10,10){\circle*{5}}
    \put(20,0){\circle*{5}}
    \put(50,0){\circle*{5}}\put(50,10){\circle*{5}}\put(60,0){\circle*{5}}\put(60,10){\circle*{5}}
    \boxs{0,0}\boxs{10,0}\boxs{20,0}\boxs{30,0}\boxs{40,0}
    \boxs{50,0}
    \boxs{0,10}\boxs{10,10}\boxs{20,10}\boxs{30,10}\boxs{40,10}
    \boxs{50,10}
    \linethickness{1pt}\put(0,0){\line(0,1){27}}
    \linethickness{1pt}\put(0,0){\line(1,0){70}}
    \put(0,10){\vector(1,-1){8}}
    \put(50,0){\vector(-1,1){8}}
    \put(75,5){$\longrightarrow$}
     \end{picture}
    \begin{picture}(100,30)(0,-5)
    \put(-20,5){$D'=$}
    \put(0,0){\circle*{5}}
    \put(10,0){\circle*{5}}\put(0,20){\circle*{5}}\put(10,10){\circle*{5}}
    \put(20,0){\circle*{5}}
    \put(40,10){\circle*{5}}\put(50,10){\circle*{5}}\put(60,0){\circle*{5}}\put(60,10){\circle*{5}}
    \boxs{0,0}\boxs{10,0}\boxs{20,0}\boxs{30,0}\boxs{40,0}
    \boxs{50,0}
    \boxs{0,10}\boxs{10,10}\boxs{20,10}\boxs{30,10}\boxs{40,10}
    \boxs{50,10}
    \linethickness{1pt}\put(0,0){\line(0,1){27}}
    \linethickness{1pt}\put(0,0){\line(1,0){70}}
    \end{picture}
    $$

\noindent\textit{Relation 2 (Permuting Lemma \ref{lem:MinorsPermutingLemma})}.
Given positive integers $h, \ell$ and $m$ such that
$2\le h<h+\ell+m\le n+1$,  $|P_h|=h-1,|P_{h+\ell}|=h+\ell-1$, $|P_{h+\ell+m}|=h+\ell+m-1$
(by convention, the last equality holds if
$h+\ell+m=n+1$) and
$\alpha_{h+\ell},...,\alpha_{h+\ell+m-1}\geq \ell$. Let $D'$ be obtained from $D$ by moving the $m$ points $P_{h+\ell},\dots,P_{h+\ell+m-1}$ to the left by $\ell$ units and moving the $\ell$ points $P_h,\dots,P_{h+\ell-1}$ to the right by $m$ units, i.e.
$$\aligned D'=\{&P_1,P_2, \dots, P_{h-1},
P_{h+\ell}-(\ell,0),P_{h+\ell+1}-(\ell,0),\dots,P_{h+\ell+m-1}-(\ell,0),\\
&P_{h}+(m,0),P_{h+1}+(m,0),\dots,P_{h+\ell-1}+(m,0),
P_{h+\ell+m},\dots,P_{n}\}.\endaligned$$
Then $\Delta(D)\sim\Delta(D')$.

\eg $n=10, h=3, \ell=4, m=3$.  $$\setlength{\unitlength}{1.5pt}
    \begin{picture}(150,30)(0,-5)
    \put(-20,5){$D=$}
    \put(0,0){\circle*{5}}\put(10,0){\circle*{5}}
    \put(10,10){\circle*{5}}\put(20,0){\circle*{5}}\put(20,10){\circle*{5}}
    \put(30,0){\circle*{5}}
    \put(40,20){\circle*{5}}\put(50,10){\circle*{5}}\put(60,0){\circle*{5}}
    \put(90,0){\circle*{5}}
    \boxs{0,0}\boxs{10,0}\boxs{20,0}\boxs{30,0}\boxs{40,0}
    \boxs{50,0}\boxs{60,0}\boxs{70,0}\boxs{80,0}
    \boxs{0,10}\boxs{10,10}\boxs{20,10}\boxs{30,10}\boxs{40,10}
    \boxs{50,10}\boxs{60,10}\boxs{70,10}\boxs{80,10}
    \linethickness{1pt}\put(0,0){\line(0,1){27}}
    \linethickness{1pt}\put(0,0){\line(1,0){100}}
    \multiput(-8,23)(2,-2){14}{\circle*{.9}}
    \multiput(18,23)(2,-2){14}{\circle*{.9}}
    \multiput(-8,23)(3,0){10}{\circle*{.9}}
    \multiput(20,-5.5)(3,0){10}{\circle*{.9}}
    \multiput(30,23)(2,-2){14}{\circle*{.9}}
    \multiput(45,23)(2,-2){14}{\circle*{.9}}
    \multiput(32,23)(3,0){5}{\circle*{.9}}
    \multiput(60,-5.5)(3,0){5}{\circle*{.9}}
   \put(21,25){$\curvearrowright$}
   \put(20,25){$\curvearrowleft$}
    \put(105,5){$\longrightarrow$}
     \end{picture}
    \begin{picture}(100,30)(0,-5)
    \put(-20,5){$D'=$}
    \put(0,0){\circle*{5}}
    \put(10,0){\circle*{5}}\put(0,20){\circle*{5}}\put(10,10){\circle*{5}}
    \put(20,0){\circle*{5}}
    \put(40,10){\circle*{5}}\put(50,0){\circle*{5}}\put(50,10){\circle*{5}}\put(60,0){\circle*{5}}
    \put(90,0){\circle*{5}}
    \boxs{0,0}\boxs{10,0}\boxs{20,0}\boxs{30,0}\boxs{40,0}
    \boxs{50,0}\boxs{60,0}\boxs{70,0}\boxs{80,0}
    \boxs{0,10}\boxs{10,10}\boxs{20,10}\boxs{30,10}\boxs{40,10}
    \boxs{50,10}\boxs{60,10}\boxs{70,10}\boxs{80,10}
    \linethickness{1pt}\put(0,0){\line(0,1){27}}
    \linethickness{1pt}\put(0,0){\line(1,0){100}}
    \multiput(-8,23)(2,-2){14}{\circle*{.9}}
    \multiput(5,23)(2,-2){14}{\circle*{.9}}
    \multiput(-8,23)(3,0){5}{\circle*{.9}}
    \multiput(20,-5.5)(3,0){5}{\circle*{.9}}
    \multiput(21,23)(2,-2){14}{\circle*{.9}}
    \multiput(45,23)(2,-2){14}{\circle*{.9}}
    \multiput(21,23)(3,0){8}{\circle*{.9}}
    \multiput(50,-5.5)(3,0){8}{\circle*{.9}}
    \end{picture}
    $$

\noindent\textit{Relation 3 (Lemma \ref{lem:powerful})}. Given positive integers $j$ and $s$. Suppose $P_{s_0}$ is the last point in $D$ satisfying $|P_i|=i-1$. Define $j=(s_0-1-|P_{s_0}|)+(s_0-|P_{s_0+1}|)+\cdots+(n-1-|P_n|)$. Suppose $|P_i|=i-1$ for $1\le i\le j+2$, $P_2=(1,0)$, $s_0\le s\le n$, and  $\alpha_s,\beta_s\ge 1$. Let
$$\aligned &D^\nwarrow=\{P_1,\dots,P_{j+1},P_{j+2}+(1,-1),P_{j+3},\dots,P_{s-1},P_s+(-1,1),P_{s+1},\dots,P_n\},\\
&D^\searrow=\{P_1,(0,1),P_3,\dots,P_{s-1},P_s+(1,-1),P_{s+2},\dots,P_n\}.\endaligned$$
Then $2\Delta(D)\sim \Delta(D^\nwarrow)+\Delta(D^\searrow)$.

\eg $n=9, i=2, j=6$.  $$\setlength{\unitlength}{1.5pt}
    \begin{picture}(100,30)(0,-5)
    \put(-20,5){$D=$}
    \put(0,0){\circle*{5}}\put(10,0){\circle*{5}}\put(20,0){\circle*{5}}\put(20,10){\circle*{5}}
    \put(30,10){\circle*{5}}\put(40,0){\circle*{5}}\put(40,10){\circle*{5}}
    \boxs{0,0}\boxs{10,0}\boxs{20,0}\boxs{30,0}\boxs{40,0}
    \boxs{0,10}\boxs{10,10}\boxs{20,10}\boxs{30,10}\boxs{40,10}
    \linethickness{1pt}\put(0,0){\line(0,1){27}}
    \linethickness{1pt}\put(0,0){\line(1,0){55}}
    \put(10,0){\vector(-1,1){8}}    \put(10,0){\vector(-1,1){6}}
    \put(20,10){\vector(1,-1){8}}
    \put(40,10){\vector(-1,1){8}}
    \put(40,10){\vector(1,-1){6}}    \put(40,10){\vector(1,-1){8}}
    \put(60,5){$\longrightarrow$}
     \end{picture}
     \begin{picture}(90,30)(0,-5)
    \put(-20,5){$D^\nwarrow=$}
    \put(0,0){\circle*{5}}\put(10,0){\circle*{5}}\put(20,0){\circle*{5}}\put(30,0){\circle*{5}}
    \put(30,10){\circle*{5}}\put(40,0){\circle*{5}}\put(30,20){\circle*{5}}
    \boxs{0,0}\boxs{10,0}\boxs{20,0}\boxs{30,0}\boxs{40,0}
    \boxs{0,10}\boxs{10,10}\boxs{20,10}\boxs{30,10}\boxs{40,10}
    \linethickness{1pt}\put(0,0){\line(0,1){27}}
    \linethickness{1pt}\put(0,0){\line(1,0){55}}
    \put(60,5){,}
     \end{picture}
     \begin{picture}(70,30)(0,-5)
    \put(-20,5){$D^\searrow=$}
    \put(0,0){\circle*{5}}\put(0,10){\circle*{5}}\put(20,0){\circle*{5}}\put(20,10){\circle*{5}}
    \put(30,10){\circle*{5}}\put(40,0){\circle*{5}}\put(50,0){\circle*{5}}
    \boxs{0,0}\boxs{10,0}\boxs{20,0}\boxs{30,0}\boxs{40,0}
    \boxs{0,10}\boxs{10,10}\boxs{20,10}\boxs{30,10}\boxs{40,10}
    \linethickness{1pt}\put(0,0){\line(0,1){27}}
    \linethickness{1pt}\put(0,0){\line(1,0){55}}
     \end{picture}
$$

By playing with the above three relations, we can easily obtain our main theorem.
Let us first define minimal staircase forms.
\begin{defn}\label{df2:minmal staircase form} We call $D=\{P_1,\dots,P_n\}$ a \emph{minimal staircase form} if
$|P_i|=i-1$ or $i-2$ for every $1\le i\le n$. For a minimal
staircase form $D$, let $\{i_1<i_2<\dots<i_\ell\}$ be the set of
$i$'s such that $|P_i|=i-1$, we define the \emph{partition type} of
$D$ to be the partition of (${n\choose 2}-\sum|P_i|$) consisting of
all the positive integers in the sequence
$$(i_1-1,i_2-i_1-1,i_3-i_2-1,\dots,i_\ell-i_{\ell-1}-1, n-i_\ell).$$
\end{defn}
\eg
  Let $n=8$ and $D=\{P_1,\dots,P_8\}$ satisfying
  $(|P_1|,\dots,|P_8|)=(0,1,1,2,4,4,5,6)$. Then $D$ is a minimal staircase form. The set
  $\{i\,\big{|}\, |P_i|=i-1\}$ equals $\{1,2,5\}$. The positive integers in the sequence $(1-1,2-1-1,5-2-1,8-5)$ are $(2,3)$, so the partition type of $D$ is
  $(2,3)$.
$$\setlength{\unitlength}{1.5pt}
   \begin{picture}(90,30)(0,-5)
    \put(0,0){\circle*{5}}\put(0,10){\circle*{5}}\put(10,0){\circle*{5}}\put(20,0){\circle*{5}}
    \put(30,10){\circle*{5}}\put(40,0){\circle*{5}}\put(50,0){\circle*{5}}\put(60,0){\circle*{5}}
    \boxs{0,0}\boxs{10,0}\boxs{20,0}\boxs{30,0}\boxs{40,0}\boxs{50,0}
    \boxs{0,10}\boxs{10,10}\boxs{20,10}\boxs{30,10}\boxs{40,10}\boxs{50,10}
    \linethickness{1pt}\put(0,0){\line(0,1){27}}
    \linethickness{1pt}\put(0,0){\line(1,0){70}}
     \end{picture}
$$

Let $p(k)$ denote
the number of partitions of an integer $k$ and $\Pi_k$ denote the set of partitions of $k$.

\begin{thm}[Main Theorem]\label{mainthm}
Let $k$ be any positive integer. There are positive constants
$c_1=8k+5, c_2=2k+1$ such that the following holds:

For integers $n,d_1,d_2$ satisfying $n\ge c_1$, $d_1\ge c_2n$,
$d_2\ge c_2n$ and $d_1+d_2={n\choose 2}-k$, the vector space
$M_{d_1,d_2}$ has dimension $p(k)$, and the $p(k)$ elements
$$\big{\{}\textrm{a minimal staircase form of bi-degree $(d_1,d_2)$
and of partition type $\mu$} \big{\}}_{\mu\in\Pi_k}$$ form a basis
of $M_{d_1,d_2}$.
\end{thm}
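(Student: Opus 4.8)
The plan is to bound $\dim_{\C}M_{d_1,d_2}$ from above and below and then match the two. For the upper bound I would show, using only Relations 1--3, that every $\Delta(D)$ with $\deg\Delta(D)=\binom n2-k$ and $\bigl(\deg_{\mathbf x}\Delta(D),\deg_{\mathbf y}\Delta(D)\bigr)=(d_1,d_2)$ is $\sim$ a $\C$-linear combination of the $p(k)$ designated minimal staircase forms; since, by Haiman's theorem that $I$ is generated by the $\Delta(D)$, the space $M_{d_1,d_2}$ is spanned by the images of such $\Delta(D)$, this gives $\dim_{\C}M_{d_1,d_2}\le p(k)$. For the lower bound I would feed the Garsia--Haglund identity $C_n(q,t)=\sum_{\lambda\in\Lambda}q^{a(\lambda)}t^{b(\lambda)}$ into $C_n(q,t)=\sum_{d_1,d_2}t^{d_1}q^{d_2}\dim M_{d_1,d_2}$, obtaining the exact count
\[
\dim_{\C}M_{d_1,d_2}=\#\{\lambda\in\Lambda:\ a(\lambda)=d_2,\ b(\lambda)=d_1\},
\]
and produce $p(k)$ elements of this set, one for each partition of $k$. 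Matching the two bounds then forces the $p(k)$ spanning minimal staircase forms to be a basis.

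To carry out the reduction I normalize $D=\{P_1,\dots,P_n\}$ so that $|P_1|\le\cdots\le|P_n|$; then $\deg\Delta(D)=\binom n2-k$ is exactly the statement $\sum_i\bigl((i-1)-|P_i|\bigr)=k$. \emph{Stage 1 (remove excess):} apply the Permuting Lemma~\ref{lem:MinorsPermutingLemma}, and its mirror obtained by exchanging $\mathbf x$ and $\mathbf y$, to slide blocks of points and lower every ``excessive'' position (one with $|P_i|\ge i$) until $|P_i|\le i-1$ for all $i$; the total deficiency $\sum_i\bigl((i-1)-|P_i|\bigr)$ is then $k$. \emph{Stage 2 (spread the deficiency):} since a sorted no-excess profile has at most $k$ deficient positions, plenty of ``staircase'' positions are available as anchors for further applications of the Permuting Lemma, which I would use to swap blocks until no position has deficiency exceeding $1$, i.e.\ until $D$ is a minimal staircase form; this already shows $M_{d_1,d_2}$ is spanned by minimal staircase forms of bi-degree $(d_1,d_2)$. \emph{Stage 3 (collapse each type):} for a fixed partition type $\mu\vdash k$, use Lemma~\ref{lem:powerful}, whose relation $2\Delta(D)\sim\Delta(D^{\nwarrow})+\Delta(D^{\searrow})$ links minimal staircase forms of the \emph{same} type, together with the Transfactor Lemma~\ref{lem:transfactor} and the Permuting Lemma~\ref{lem:MinorsPermutingLemma}, to collapse the span of all type-$\mu$ minimal staircase forms of bi-degree $(d_1,d_2)$ onto a single line $\C\cdot\Delta\bigl(D(\mu)\bigr)$; summing over $\mu$ gives $M_{d_1,d_2}=\sum_{\mu\vdash k}\C\cdot\Delta(D(\mu))$ and hence $\dim_{\C}M_{d_1,d_2}\le p(k)$ (and, tracking the partition type through Stage 3, that the choice of representative in the statement is immaterial). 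The hypotheses $n\ge 8k+5$ and $d_1,d_2\ge(2k+1)n$ are calibrated precisely so that throughout this reduction the points being moved are deep enough and numerous enough for the numerical side conditions of Relations 1--3 (such as $\alpha_{h+\ell},\dots,\alpha_{h+\ell+m-1}\ge\ell$ in the Permuting Lemma) always to be met, and so that the set of admissible bi-degrees with $d_1+d_2=\binom n2-k$ is nonempty.

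For the lower bound I would study the count above through the substitution $\epsilon_i:=(n-i)-\lambda_i$. A direct computation gives $\lambda_i-\lambda_j+i-j=\epsilon_j-\epsilon_i$, so the conditions defining $\Lambda$ turn into $\epsilon_n=0$, $0\le\epsilon_i\le n-i$, $\epsilon_i\le\epsilon_{i+1}+1$, while $a(\lambda)=\sum_i\epsilon_i$ and $b(\lambda)=\#\{i<j:\epsilon_j-\epsilon_i\in\{0,1\}\}$; equivalently one must count such sequences $\epsilon$ with $\sum_i\epsilon_i=d_2$ and $\#\{i<j:\epsilon_j-\epsilon_i\notin\{0,1\}\}=\sum_i\epsilon_i+k$. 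In the stable range these constraints force $\epsilon$ to agree with the reverse staircase $(n-1,n-2,\dots,1,0)$ except along a single contiguous stretch whose profile is recorded by a partition of $k$, so there are exactly $p(k)$ of them; but for the conclusion it is enough to exhibit these $p(k)$ sequences $\epsilon$ explicitly, one per partition of $k$ --- this is a routine verification --- since the upper bound already supplies $\dim_{\C}M_{d_1,d_2}\le p(k)$ and the Garsia--Haglund count is exactly $\dim_{\C}M_{d_1,d_2}$, whence $\dim_{\C}M_{d_1,d_2}=p(k)$ and the $p(k)$ spanning minimal staircase forms form a basis.

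The crux is the upper bound, namely Stages 1 and 2. They must be organized as an induction on a composite complexity measure --- first the total excess $\sum_{i:\,|P_i|\ge i}(|P_i|-i+1)$, then a convexity functional of the deficiency sequence such as $\sum_i\binom{(i-1)-|P_i|}{2}$, then a distance-to-canonical-position quantity --- together with a verification that each single move drawn from Relations 1--3 strictly decreases it while keeping the numerical hypotheses of the lemma being applied in force; this bookkeeping, and nothing else, is where the constants $8k+5$ and $2k+1$ get consumed. By contrast the lower-bound step is light, precisely because one does \emph{not} need to show the count has no further solutions. Thus, once Relations 1--3 are in hand, the theorem really does follow by ``playing with'' them, as the paper asserts.
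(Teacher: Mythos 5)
Your overall architecture coincides with the paper's: an upper bound $\dim M_{d_1,d_2}\le p(k)$ obtained by reducing every $\Delta(D)$ to the $p(k)$ minimal staircase forms via Relations 1--3, and a lower bound $\dim M_{d_1,d_2}\ge p(k)$ extracted from the Garsia--Haglund identity by exhibiting $p(k)$ partitions $\lambda$ with $b(\lambda)=d_1$, $a(\lambda)=d_2$. The genuine gap is in the lower bound. You declare the exhibition of these $p(k)$ sequences for an \emph{arbitrary} admissible bi-degree to be ``a routine verification,'' but this is exactly the delicate point: the statistic $b(\lambda)$ is highly sensitive to where the $k$ units of deficiency are placed, and one needs $p(k)$ \emph{distinct} $\lambda$'s all having the \emph{same} pair $(a,b)=(d_2,d_1)$. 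The paper only manages this for the special bi-degrees $d_1=u(u+1)/2$, $d_2=v(v+1)/2+uv-k$ (Lemma \ref{greaterthanpartition}), where $\lambda$ is a staircase perturbed by $\varepsilon_i\in\{0,1\}$ and the extra linear constraint $\sum i\varepsilon_i=k(k+1)/2$ is precisely what keeps $b(\lambda)$ constant across the $p(k)$ choices; for general $(d_1,d_2)$ it does \emph{not} construct such $\lambda$'s directly, but instead passes to a larger $\tilde n$, realizes the special bi-degree there, and transfers the bound back through the multiplication map $L:M_{d_1,d_2}\to\tilde M_{\tilde d_1,\tilde d_2}$, $f\mapsto f\cdot f_0$, whose surjectivity comes from the upper-bound analysis at level $\tilde n$. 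Your structural claim that the constraints ``force $\epsilon$ to agree with the reverse staircase except along a single contiguous stretch'' is unjustified (and does not match the shapes that actually achieve two large statistics, which are long near-constant plateaus with $k$ dips); without either an explicit general construction or a transfer argument of the paper's kind, the inequality $\dim M_{d_1,d_2}\ge p(k)$ is not established.

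On the upper bound, your Stage 1 is both unnecessary and unsupported: the Permuting Lemma \ref{lem:MinorsPermutingLemma} requires staircase anchors $s_h=h-1$, $s_{h+\ell}=h+\ell-1$, $s_{h+\ell+m}=h+\ell+m-1$ and so cannot be used to ``lower an excessive position''; the correct observation is Corollary \ref{trivialdet}, namely that any $D$ with $s_j>j-1$ (after sorting) already has vanishing staircase form, hence $\Delta(D)\sim 0$. More seriously, Stages 2--3 compress into a few sentences what is the technical bulk of the paper (Propositions \ref{staircaseformofnonminimal} and \ref{staircaseformofpartition}): the collapse within a fixed partition type is only valid modulo $I_{<d}$ \emph{and} minimal staircase forms of strictly lower types in the order $\le_P$, and proving it requires an induction on $k$ in which Lemma \ref{lem:powerful} is used to establish the scaling relation $a(D'')\Delta(D')\sim\Delta(D'')$, not merely ``played with''; your proposed composite complexity measure is not shown to decrease under legal moves, and it is precisely in making those moves legal (keeping the anchors and the conditions $\alpha_i\ge\ell$, $P_2=(1,0)$, etc.\ available) that the constants $8k+5$ and $(2k+1)n$ are consumed. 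So the plan is the right one, but both halves need the substance the paper supplies: the stabilization argument for the lower bound, and the inductive, type-ordered reduction for the upper bound.
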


\begin{remark}
Bergeron and Chen have found explicit bases for $M_{d_1,d_2}$ for
certain bi-degrees using a different method  \cite{BC}.
\end{remark}

Let $p(\delta, k)$ be the number of
partitions of $k$ into at most $\delta$ parts. It is elementary that
$p(\delta, k)$ is the same as the number of partitions of $k$ into
parts no larger than $\delta$ (for example, \cite[p.83]{hardy}). By
convention $p(\delta,0)=1$ for $\delta\ge 0$; $p(0,k)=0$ for $k>0$. We pose the following conjecture generalizing the main theorem.
\begin{conj}\label{supermainthm}
Let $k\leq n-3$ be a non-negative integer. Let $d_1, d_2$ be two non-negative integers such that $d_1+d_2=n(n-1)/2-k$. Let $\delta=\emph{min}\{d_1, d_2\}$. Then
$$\emph{dim } M_{d_1, d_2}=p(\delta, k),$$
and there is a basis that can be constructed combinatorially.
\end{conj}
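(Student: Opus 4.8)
The plan is to reduce the conjecture to two ingredients: a combinatorial identity that, through the Garsia--Haglund formula, evaluates $\dim M_{d_1,d_2}$, and an explicit spanning set of that exact size built from minimal staircase forms. Throughout I assume $d_2=\delta\le d_1$; the opposite case follows by transposing every point of a diagram, an operation that induces an isomorphism $M_{d_1,d_2}\cong M_{d_2,d_1}$ carrying minimal staircase forms of partition type $\mu$ to minimal staircase forms of partition type $\mu$. Since $\dim M_{d_1,d_2}$ is the coefficient of $t^{d_1}q^{d_2}$ in $C_n(q,t)$, the Garsia--Haglund formula turns the dimension assertion into the purely combinatorial identity
\[
\#\{\lambda\in\Lambda : a(\lambda)=d_2,\ b(\lambda)=d_1\}=p(\delta,k),\qquad k=\binom n2-d_1-d_2\le n-3 .
\]
I would prove this by constructing a bijection from the left-hand side onto the partitions of $k$ with at most $\delta$ parts, reading the partition off from the local shape of the boundary path of $\lambda$ near the diagonal of the staircase $\delta_n=(n-1,\dots,1,0)$: the hypothesis $k\le n-3$ is exactly what keeps this local data confined to at most $\delta$ disjoint blocks (whose sizes become the parts) and prevents two blocks from colliding near an end of the staircase. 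The necessity of $k\le n-3$ is already visible at $n=4$, $k=2$, $(d_1,d_2)=(2,2)$, where the count is $1<p(2,2)$.

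For the basis itself, for each partition $\mu$ of $k$ with $\ell(\mu)\le\delta$ I would write down a specific minimal staircase form $D_\mu\in\mathfrak D$ of bi-degree $(d_1,d_2)$ and partition type $\mu$. Existence is a bookkeeping matter once one establishes the following distinctness constraint as a lemma: in any minimal staircase form, each maximal run of consecutive deficient positions (positions $i$ with $|P_i|=i-2$) is forced to spend at least one unit of $y$-degree and at least one unit of $x$-degree in order to keep the $n$ points of $D$ distinct, whence $\ell(\mathrm{type}(D))\le\min\{\deg_{\mathbf x}\Delta(D),\deg_{\mathbf y}\Delta(D)\}$. Thus $\ell(\mu)\le\delta$ is precisely the condition allowing $D_\mu$ to exist, and, crucially, \emph{every} minimal staircase form of bi-degree $(d_1,d_2)$ automatically has partition type with at most $\delta$ parts. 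The proposed basis of $M_{d_1,d_2}$ is then $\{\Delta(D_\mu)\}_{\mu\vdash k,\ \ell(\mu)\le\delta}$, a set of $p(\delta,k)$ elements.

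It remains to prove that $\Delta(D)\sim$ a $\C$-linear combination of the $\Delta(D_\mu)$ for every $D\in\mathfrak D$ of bi-degree $(d_1,d_2)$. As in Theorem~\ref{mainthm}, one first applies Relations 1--3 to rewrite $\Delta(D)$ modulo $I_{<d}$ as a combination of minimal staircase forms of the same bi-degree, then uses Relations 1 and 2 to slide points and Relation 3 to trade $P_2=(1,0)$ for $(0,1)$, until all minimal staircase forms of a given partition type are identified, modulo $\sim$, as scalar multiples of one another. By the previous paragraph every occurring type has at most $\delta$ parts, so this leaves at most $p(\delta,k)$ classes, spanned by the $\Delta(D_\mu)$; combined with the dimension count this forces $\{\Delta(D_\mu)\}$ to be a basis.

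The main obstacle is executing this last step in the \emph{boundary regime}, where $\delta$ is small while $k\le n-3$ may still be comparatively large. Relations 1--3 carry local positivity hypotheses ($\beta_i>0$, $\alpha_j>0$, $\alpha_{h+\ell},\dots\ge\ell$, $\alpha_s,\beta_s\ge 1$) which in Theorem~\ref{mainthm} are guaranteed by the assumptions $d_1,d_2\ge c_2 n$; when $\delta$ is small almost every $\beta_i$ vanishes, these hypotheses can fail, and Relations 1--3 alone no longer suffice to reduce an arbitrary $\Delta(D)$ to minimal staircase forms or to collapse all same-type minimal staircase forms. I expect this will require either new linear relations valid in low $y$-degree, or a direct analysis exploiting the rigidity of the regime (with $\sum\beta_i=\delta$, at most $\delta$ points leave the $x$-axis, so admissible diagrams are sharply restricted), always checking that the minimal staircase forms one reduces to really have bi-degree exactly $(d_1,d_2)$. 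By contrast, I expect the partition bijection of the first step and the construction of $D_\mu$ to be elementary; the difficulty is making the reduction uniform across the whole range $k\le n-3$, rather than only for $d_1,d_2\gg n$.
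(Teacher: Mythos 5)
You are not being compared against an in-paper argument here: the statement is Conjecture \ref{supermainthm}, which the paper only states; the remark following it defers the proof to the sequel \cite{LL2}. Judged on its own terms, your proposal is a sensible high-level strategy (and its outer shell mirrors the paper's proof of Theorem \ref{mainthm}: reduce to minimal staircase forms, identify those of equal partition type, then match against a lower bound coming from the Garsia--Haglund formula), but it is not a proof, because both load-bearing steps are left unestablished. On the dimension side, passing from $\dim M_{d_1,d_2}$ to the count $\#\{\lambda\in\Lambda:\ a(\lambda)=d_2,\ b(\lambda)=d_1\}$ via Garsia--Haglund is legitimate, and your check that $k\le n-3$ is needed (at $n=4$, $(d_1,d_2)=(2,2)$ the count is $1<p(2,2)=2$) is correct; but the claimed identity with $p(\delta,k)$ is itself a substantive combinatorial theorem. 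The statistic $b(\lambda)$ counts pairs globally, so a bijection that ``reads the partition off the local shape of the boundary path near the diagonal'' is only named, not constructed: you must hit the exact value of $b(\lambda)$, not merely the deficit $k$, and you must control the degenerate situations when $\delta$ is small and the whole configuration is pinned against one end of the staircase.

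The more serious gap is the spanning step, and you concede it yourself. Your structural lemma is fine: in a nonzero minimal staircase form each block of size at least two begins with two distinct points of equal degree, forcing at least one unit of $x$-degree and one of $y$-degree per block, so every occurring partition type has at most $\delta$ parts. But the reduction of an arbitrary $\Delta(D)$ of bidegree $(d_1,d_2)$ to minimal staircase forms (Propositions \ref{staircaseformofnonminimal} and \ref{staircaseformofpartition}) and the identification of all minimal staircase forms of a fixed type up to scalars both rest on Lemmas \ref{lem:transfactor}, \ref{lem:MinorsPermutingLemma} and \ref{lem:powerful}, whose hypotheses ($\beta_i>0$, $\alpha_j>0$, $\alpha_{h+\ell},\dots,\alpha_{h+\ell+m-1}\ge\ell$, the normalization $P_2=(1,0)$, the supply of size-one blocks) are secured in Theorem \ref{mainthm} only by the assumptions $n\ge 8k+5$ and $d_1,d_2\ge(2k+1)n$. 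The conjecture must cover the boundary regime where $\delta$ is small compared to $n$ (even comparable to $k$), and there these hypotheses genuinely fail: with $\sum\beta_i=\delta$ small, almost no point can be moved northwest, so Relations 1--3 as stated cannot normalize $D$ or collapse same-type forms. You explicitly leave open what replaces them (``new linear relations valid in low $y$-degree, or a direct analysis''), yet that replacement is exactly the content of the conjecture beyond Theorem \ref{mainthm}. As written, the proposal therefore reduces the conjecture to two open problems rather than proving it.
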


\begin{remark}
The conjecture is proved in our subsequent paper \cite{LL2}.
\end{remark}

The structure of the paper is as follows. In \S2 we give the definition of staircase forms and discuss their properties. In \S3 we prove the three relations given at the beginning of \S1.2, and  at the end of this section we give the proof of the main theorem. \S4 gives a conjectural minimal set of generators as an answer to Haiman's question, which is equivalent to a conjecture of Mahir Can and Nick Loehr.

\noindent\emph{Acknowledgements}. The authors thank Jim Haglund, Alexander Woo and Alexander Yong for many suggestions and shared insights. We thank the anonymous referees for helpful and constructive suggestions and comments on the paper.

\section{Asymptotic behavior of $t,q$-Catalan numbers}
In this section,  we first introduce the notion of staircase form and block diagonal form, which are matrices whose determinants are equivalent to $\Delta(D)$ modulo $(\mathbf{x},\mathbf{y})I$. Then we define the partition type of a staircase form. Finally we give Corollary \ref{lessthanpartitionnumber}, which is half of the main theorem.

\begin{defn}
Let $D=\{P_1,\dots,P_n\}$ where $P_i=(\alpha_i,\beta_i)$ for $1\le i\le n$. Define $s_i:=\alpha_i+\beta_i$. We say $D$ is \emph{in standard order}, if (i) $s_1\leq s_2\leq \cdots\leq s_n$, and (ii) if $s_i=s_{i+1}$ then $\alpha_i<\alpha_{i+1}$.
\end{defn}

\begin{defnprop}[Staircase form]\label{definitionofstaircaseform}
Let $D=\{(\alpha_{1},\beta_{1}), (\alpha_{2},\beta_{2}),\dots, (\alpha_{n},\beta_{n})\}\in\mathfrak{D}$. Define $s_j:=\alpha_j + \beta_j$. Define $d:=\sum_js_j=\sum_j(\alpha_j+\beta_j)$ the degree of $D$, and $(d_1,d_2):=(\sum\alpha_j,\sum\beta_j)$ the bi-degree of $D$. Define $k=n(n-1)/2-d$ the \emph{deficit} of $D$.  Denote by $I_{<d}$ the ideal of $\C[\textbf{x},\textbf{y}]$ generated by homogeneous elements of degree less than $d$ in $I$. Then there is a matrix $S$ whose (i,j)-th entry is
$$\left\{
           \begin{array}{ll}
0, &\hbox{if $i\le s_j$};\\
a_{i1}a_{i2}\cdots a_{i,s_j}  \hbox{ where $a_{i\ell}$ is either $x_i-x_\ell$ or $y_i-y_\ell$},&\hbox{otherwise,}
           \end{array}
         \right.$$
for all $1\le i,j\le n$, such that $\det S\sim\Delta(D)$  modulo $I_{<d}$.  Rearranging the columns of $S$ if necessary, we assume that  $s_1\leq s_2\leq \cdots\leq s_n$. We call the matrix $S$, or its determinant $\det S$, a \emph{staircase form} of $D$.

\end{defnprop}
\begin{proof}
The idea is to construct a matrix that is as close as possible to an Echelon form modulo $I_{<d}$.

For simplicity of notation, denote $x_{ij}=x_i-x_j$, $y_{ij}=y_i-y_j$. If $\alpha_1>0$, then the first column of the matrix $\Delta(D)$
$$[x_1^{\alpha_1}y_1^{\beta_1},\dots  , x_n^{\alpha_1}y_n^{\beta_1}]^T$$
equals to
$$x_1[x_1^{\alpha_1-1}y_1^{\beta_1},\dots , x_n^{\alpha_1-1}y_n^{\beta_1}]^T+[0,x_2^{\alpha_1-1}x^{}_{21}y_2^{b_1},\dots,x_n^{\alpha_1-1}x^{}_{n1}y_n^{\beta_1}]^T.$$
Therefore
$$\Delta(D)= x_1\det\bmx x_1^{\alpha_1-1}y_1^{\beta_1}&\cdots  &  x_1^{\alpha_n}y_1^{\beta_n}\\
\vdots&\ddots &\vdots\\
x_n^{       \alpha_1-1}y_n^{\beta_1} &\cdots  & x_n^{\alpha_n}y_n^{\beta_n} \emx+
\det \bmx 0& x_1^{\alpha_2}y_1^{\beta_2}&\cdots  &  x_1^{\alpha_n}y_1^{\beta_n}\\
x_2^{\alpha_1-1}x^{}_{21}y_2^{\beta_1}& x_2^{\alpha_2}y_2^{\beta_2}&\cdots & x_2^{\alpha_n}y_2^{\beta_n}\\
\vdots&\vdots &\ddots &\vdots\\
x_n^{\alpha_1-1}x^{}_{n1}y_n^{\beta_1}& x_n^{\alpha_2}y_n^{\beta_2} &\cdots & x_n^{\alpha_n}y_n^{\beta_n} \emx$$ The first summand is a polynomial in $I_{<d}$, so $\Delta(D)$
is equivalent to the second summand modulo $I_{<d}$. If $\alpha_1-1>0$, we write the first column of the second matrix
$$[0,x_2^{\alpha_1-1}x^{}_{21}y_2^{\beta_1},\dots,x_n^{\alpha_1-1}x_{n1}^{}y_n^{\beta_1}]^T$$
as a
sum of two vectors $$x_2[0,x_2^{\alpha_1-2}x^{}_{21}y_2^{\beta_1},\dots,x_n^{\alpha_1-2}x^{}_{n1}y_n^{\beta_1}]^T
+ [0,0, x_3^{\alpha_1-2}x^{}_{32}x^{}_{31}y_3^{\beta_1},\dots,x_n^{\alpha_1-2}x^{}_{n2}x^{}_{n1}y_n^{\beta_1}]^T.$$ Then by a similar argument as above, $\Delta(D)$ is equivalent to
$$\det \bmx 0& x_1^{\alpha_2}y_1^{\beta_2}&\cdots  &  x_1^{\alpha_n}y_1^{\beta_n}\\
0& x_2^{\alpha_2}y_2^{\beta_2}&\cdots  &  x_2^{\alpha_n}y_2^{\beta_n}\\
x_3^{\alpha_1-2}x^{}_{32}x^{}_{31}y_3^{\beta_1}& x_3^{\alpha_2}y_3^{\beta_2}&\cdots & x_3^{\alpha_n}y_3^{\beta_n}\\
\vdots&\vdots &\ddots &\vdots\\
x_n^{\alpha_1-2}x^{}_{n2}x^{}_{n1}y_n^{\beta_1}&x_n^{\alpha_2}y_n^{\beta_2} &\cdots & x_n^{\alpha_n}y_n^{\beta_n} \emx$$ modulo $I_{<d}$. If $\beta_1>0$, we can apply the similar operation. Repeating this operation, we will eventually replace the first
column by the following column vector
$$\begin{bmatrix}
 0\\
 \vdots\\
 0\\
 x_{s_1+1,1}x_{s_1+1,2}\cdots x_{s_1+1,\alpha_1}y_{s_1+1,\alpha_1+1}y_{s_1+1,\alpha_1+2}\cdots y_{s_1+1,s_1}\\
x_{s_1+2,1}x_{s_1+2,2}\cdots x_{s_1+2,\alpha_1}y_{s_1+2,\alpha_1+1}y_{s_1+2,\alpha_1+2}\cdots y_{s_1+2,s_1}\\
\vdots\\
x_{n1}x_{n2}\cdots x_{n,\alpha_1}y_{n,\alpha_1+1}y_{n,\alpha_1+2}\cdots y_{n,s_1}
\end{bmatrix}$$
where the first $\min\{s_1,n\}$ entries are 0. Note that we may use a different order of operations with respect to $x_i$ or $y_i$, and the nonzero entries in the first column result might be different.

Applying this procedure for every column, we get a matrix with $\min\{s_j,n\}$ zeros at the $j$-th column for $1\le j\le n$. Rearrange the columns such that the numbers of zeros in the columns are weakly increasing from left to right. The resulting matrix is a staircase form $S$ of $D$.
\end{proof}

\begin{cor}\label{trivialdet}
Let $D$ and $S$ be defined as in Definition-Proposition
\ref{definitionofstaircaseform}. If $s_j>j-1$ for some $1\leq j\leq
n$, then $\Delta(D)\in I_{<d}$.
\end{cor}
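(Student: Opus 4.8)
The plan is to read off the conclusion directly from the staircase form $S$ attached to $D$ in Definition-Proposition \ref{definitionofstaircaseform}. The point is that the hypothesis $s_j>j-1$ forces $S$ to contain a zero block so large that $\det S$ vanishes identically as a polynomial, and then $\Delta(D)\in I_{<d}$ because $\det S\sim\Delta(D)$ modulo $I_{<d}$.

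Concretely, recall that after the column rearrangement in the construction we have $s_1\le s_2\le\cdots\le s_n$, and that the $(i,j)$-entry of $S$ equals $0$ whenever $i\le s_j$. Suppose $s_{j_0}>j_0-1$, i.e.\ $s_{j_0}\ge j_0$, for some $1\le j_0\le n$. By weak monotonicity, $s_j\ge s_{j_0}\ge j_0$ for every $j\ge j_0$; hence for every $i$ with $1\le i\le j_0$ and every $j$ with $j_0\le j\le n$ we have $i\le j_0\le s_j$, so the $(i,j)$-entry of $S$ is $0$. In other words, the submatrix of $S$ on the $j_0$ rows $\{1,\dots,j_0\}$ and the $n-j_0+1$ columns $\{j_0,\dots,n\}$ is identically zero.

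Next I would invoke the elementary linear-algebra fact that an $n\times n$ matrix possessing an $a\times b$ zero submatrix with $a+b>n$ has vanishing determinant: in the Leibniz expansion, a nonzero term would require a permutation sending those $a$ rows injectively into the complement of those $b$ columns, a set of size $n-b<a$, which is impossible. Here $a=j_0$ and $b=n-j_0+1$, so $a+b=n+1>n$ and $\det S=0$. Since $\det S\sim\Delta(D)$ modulo $I_{<d}$, i.e.\ $\Delta(D)-\det S\in I_{<d}$, and $\det S=0$, we conclude $\Delta(D)\in I_{<d}$.

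There is essentially no obstacle in this argument; the only thing to keep track of is that the zeros produced column-by-column in the construction of $S$ sit in the top rows, so that weak monotonicity of the sequence $(s_j)$ propagates these vanishings across every column to the right of the $j_0$-th and yields the oversized zero block. (If $s_{j_0}\ge n$ the $j_0$-th column is already identically zero, the degenerate case of the same phenomenon.)
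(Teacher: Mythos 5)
Your proof is correct and is exactly the argument the paper intends: its proof consists of the single remark that $\det S=0$, and your zero-submatrix count (a $j_0\times(n-j_0+1)$ zero block forcing $\det S=0$, then $\Delta(D)\sim\det S$ modulo $I_{<d}$) is just the careful write-up of that same observation.
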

\begin{proof}
It is easy to see that $\text{det}S=0$.
\end{proof}

\begin{defn}\label{df:block diagonal} Let $D$ and $S$ be defined as
in Definition-Proposition \ref{definitionofstaircaseform}. Consider the set $\{j: \, s_j=j-1\}=\{r_1< r_2<\dots< r_\ell\}$ and define $r_{\ell+1}=n+1$. For  $1\le t\le \ell$, define the $t$-th block $B_t$ of $S$ to be the square submatrix of $S$ of size $(r_{t+1}-r_t)$ whose upper-left corner is the ($r_t,r_t$)-entry.  Define the \emph{block diagonal form} $B(S)$ of $S$ to be the block diagonal matrix $\hbox{diag}(B_1,\dots,B_\ell)$.
\end{defn}

It is easy to see that $\det B(S)=\det S$.

\begin{exmp}\label{examplesta}
Let $D=\{(0,0),(1,0),(0,2),(1,1),(3,1)\}$. It is in standard order.
Then
$$\Delta(D)=\left[
                                   \begin{array}{ccccc}
                                      1 & \,x_{1} &\,y_{1}^2 &x_{1} y_{1}  &\, x_{1}^{3} y_{1} \\
                                      1 & \,x_{2} &\,y_{2}^2 &x_{2} y_{2}  &\, x_{2}^{3} y_{2} \\
                                      1 & \,x_{3} &\,y_{3}^2 &x_{3} y_{3}  &\, x_{3}^{3} y_{3} \\
                                      1 & \,x_{4} &\,y_{4}^2 &x_{4} y_{4}  &\, x_{4}^{3} y_{4} \\                                                                           1 & \,x_{5} &\,y_{5}^2 &x_{5} y_{5}  &\, x_{5}^{3} y_{5} \\
                                                                            \end{array}
                                  \right],
$$
and one possible staircase form of $D$ is
$$\left[
                                   \begin{array}{ccccc}
                                      1 & \,0      &\,0            &\,0  &\, 0 \\
                                      1 & \,x_{21} &\,0            &\,0  &\, 0 \\
                                      1 & \,x_{31} &\,y_{31}y_{32} &\,x_{31}y_{32} &\,0 \\
                                      1 & \,x_{41}  &\,y_{41}y_{42} &\,x_{41}y_{42}  &\, 0 \\                                                                           1 & \,x_{51} &\,y_{51}y_{52} &\,x_{51}y_{52}  &\, x_{51}y_{52}x_{53}x_{54} \\
                                                                            \end{array}
                                  \right].
$$
The corresponding block diagonal form is
$$\left[
                                   \begin{array}{ccccc}
                                      1 & \,0      &\,0            &\,0  &\, 0 \\
                                      0 & \,x_{21} &\,0            &\,0  &\, 0 \\
                                      0 & \,0 &\,y_{31}y_{32} &\,x_{31}y_{32} &\,0 \\
                                      0 & \,0  &\,y_{41}y_{42} &\,x_{41}y_{42}  &\, 0 \\                                                                           0 & \,0 &\,0 &\,0  &\, x_{51}y_{52}x_{53}x_{54} \\
                                                                            \end{array}
                                  \right].
$$ \qed
\end{exmp}

Now we give an alternative definition of minimal staircase form and its partition type. It is equivalent to Definition \ref{df2:minmal staircase form}.
\begin{defn}\label{minimalst}
Suppose that $\mu=\sum m_i j_i\in\Pi_k$ is a partition of $k$, where $j_i$ are distinct positive integers.
Given a nonzero staircase form $S$, if for each $i$ the block diagonal form $B(S)$ contains exactly $m_i$ blocks with each having $j_i$ nonzero entries above the diagonal,
then we say $S$ is of \emph{partition type} $\mu$. Furthermore, if
\begin{equation}\label{diagonal_one_above}
(\text{the entry in the }i\text{-th row and }j\text{-th column in }S)= 0 \text{ for every }i,j \text{ with }j>i+1,
\end{equation}
then $S$ is called a \emph{minimal} staircase form of partition type
$\mu$. We call a block is minimal if the block satisfies condition
(\ref{diagonal_one_above}).\qed
\end{defn}

\begin{exmp}\label{eg:minimal staircase}
Let $n=11$, $k=7$, $s_1=0$, $s_2=1$, $s_3=2$, $s_4=2$, $s_5=4$, $s_6=4$, $s_7=4$, $s_8=7$, $s_9=7$, $s_{10}=8$ and $s_{11}=9$.
Then
$S$ is a staircase form of partition type $3+3+1$, but is not minimal
because there is a nonzero entry in the fifth row and seventh column. The 4-th block
is not minimal.

\begin{picture}(200,105)
\put(255,103){\line(3,-2){150}}
\put(0,50){
\tiny{
$S=\left[
                                   \begin{array}{ccccccccccc}
                                      * & 0 &0 &0  & 0 & 0& 0& 0& 0& 0& 0\\
                                      * & * &0 &0  & 0 & 0& 0& 0& 0& 0& 0\\
                                      * & * &* &*  & 0 & 0& 0& 0& 0& 0& 0\\
                                      * & * &* &*  & 0 & 0& 0& 0& 0& 0& 0\\
                                      * & * &* &*  & * & *& *& 0& 0& 0& 0\\
                                      * & * &* &*  & * & *& *& 0& 0& 0& 0\\
                                      * & * &* &*  & * & *& *& 0& 0& 0& 0\\
                                      * & * &* &*  & * & *& *& *& *& 0& 0\\
                                      * & * &* &*  & * & *& *& *& *& *& 0\\
                                      * & * &* &*  & * & *& *& *& *& *& *\\
                                      * & * &* &*  & * & *& *& *& *& *& *\\
                                   \end{array}
   \right],
\quad B(S)=\left[
                                   \begin{array}{ccccccccccc}
                                      * & 0\, &0 \, &0  & 0\, & 0& 0\,& 0\,& 0\,& 0\,& 0\\
                                      0 & * &0 &0  & 0 & 0& 0& 0& 0& 0& 0\\
                                      0 & 0 &* &*  & 0 & 0& 0& 0& 0& 0& 0\\
                                      0 & 0 &* &*  & 0 & 0& 0& 0& 0& 0& 0\\
                                      0 & 0 &0 &0  & * & *& *& 0& 0& 0& 0\\
                                      0 & 0 &0 &0  & * & *& *& 0& 0& 0& 0\\
                                      0 & 0 &0 &0  & * & *& *& 0& 0& 0& 0\\
                                      0 & 0 &0 &0  & 0 & 0& 0& *& *& 0& 0\\
                                      0 & 0 &0 &0  & 0 & 0& 0& *& *& *& 0\\
                                      0 & 0 &0 &0  & 0 & 0& 0& *& *& *& *\\
                                      0 & 0 &0 &0  & 0 & 0& 0& *& *& *& *\\
                                   \end{array}
           \right].
$}
}
\end{picture}\qed
\end{exmp}

\begin{defn}
Define a natural partial order on the set of partitions $\Pi_k$ as follows: for two partitions $\mu=(\mu_1+\cdots+\mu_s)$ and $\nu=(\nu_1+\cdots+\nu_t)$ in $\Pi_k$, define $\mu<_P\nu$ if $\mu\neq \nu$ and $\mu$ is a subpartition of $\nu$, i.e., if it is possible to rearrange the order of $\mu$ as $(\mu'_1+\cdots+\mu'_s)$ such that there exist $0=i_0<i_1< \cdots< i_{t-1}<i_t=s$ satisfying $$\mu'_{i_{r-1}+1}+\mu'_{i_{r-1}+2}+\cdots+\mu'_{i_r}=\nu_r, \quad \hbox{ for } r=1,2,\dots,t.$$
Define $\mu\le_P\nu$ if $\mu=\nu$ or $\mu<_P\nu$.
\qed
\end{defn}
\begin{exmp}
In the set $\Pi_{10}$ of partitions of $10$, we have $(4+2+2+1+1) \,
<_P \,(5+3+2)$ because we can rearrange $(4+2+2+1+1)$ to
$(4+1+2+1+2)$, and $4+1=5$, $2+1=3$, $2=2$. \qed
\end{exmp}

The following two propositions are essential ingredients to prove the main
 theorem. We will only state the propositions here but leave the proofs to \S3.

\begin{prop}\label{staircaseformofnonminimal}
Suppose $n\ge 8k+5$, $d_1,d_2\ge (2k+1)n$ and fix a partition $\mu=\sum m_i j_i\in \Pi_k$.
Then any nonzero staircase form $f$ of type $\mu$ of
bidegree $(d_1,d_2)$ is in the ideal
$$I_{<d}+(\hbox{minimal staircase forms of bidegree } (d_1,d_2)\hbox{ and of partition types} \le_P\mu),$$ that is to say,
 $f$ can be generated by elements in $I_{<d}$ and minimal staircase forms
 of the same or lower partition types
 of bidegree $(d_1,d_2)$.
\end{prop}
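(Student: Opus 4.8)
The plan is to prove the proposition by induction, using only the three relations stated above (Relations 1--3). I would order the induction primarily by the partition type $\mu$ with respect to $\le_P$, and secondarily by the \emph{defect} of the staircase form $f=\det S$, by which I mean the number of blocks of $B(S)$ minus its minimum possible value $n-k$; equivalently $\sum_t\bigl(D_t-(m_t-1)\bigr)$ summed over the blocks, where $m_t$ is the matrix size and $D_t$ the number of nonzero entries strictly above the diagonal of the $t$-th block. The defect is always $\ge 0$, and it vanishes exactly when every block of $B(S)$ is minimal; a defect-$0$ staircase form of type $\mu$ is itself one of the claimed generators, so the base of the induction is immediate.

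For the inductive step, suppose $f$ has positive defect, so $B(S)$ contains a non-minimal block (necessarily of matrix size $\ge 3$). The first stage is to put $f$ into a normal form. Using the Permuting Lemma (Relation 2) I would move a non-minimal block to the last position of $B(S)$; using the Transfactor Lemma (Relation 1) I would redistribute $x$- and $y$-degree among the leading size-$1$ blocks so that $P_1=(0,0)$, $P_2=(1,0)$, the first $j+2$ blocks are size-$1$ blocks (where $j$ is the deficit of the now-last block, so $j\le k$), and the coordinate conditions $\alpha_{\bullet}\ge(\text{block size})$ of Relation 2 and $\alpha_s,\beta_s\ge 1$ of Relation 3 can all be met along the way. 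This is where the quantitative hypotheses are consumed: the non-size-$1$ blocks have total matrix size at most $k+(\text{number of parts of }\mu)\le 2k$, so there are at least $n-2k\ge 6k+5$ size-$1$ blocks, far more than the $j+2\le k+2$ needed in front, and since $d_1,d_2\ge(2k+1)n$ there is enough $x$- and $y$-degree available to be shifted around by Relation 1 so that the relevant $\alpha$'s and $\beta$'s stay in range at each step. Pinning down the explicit constants $8k+5$ and $2k+1$ is exactly the problem of budgeting size-$1$ blocks and excess degree against the worst $\mu\in\Pi_k$.

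With $f$ in normal form, apply the Lemma (Relation 3) to get $2f=2\Delta(D)\sim\Delta(D^{\nwarrow})+\Delta(D^{\searrow})$ modulo $I_{<d}$ (possibly followed by one more application of the Permuting Lemma to restore the standard position). The heart of the matter is then to check that, after reducing each of $\Delta(D^{\nwarrow})$ and $\Delta(D^{\searrow})$ back to staircase form, one obtains sums of staircase forms of bidegree $(d_1,d_2)$ whose partition type is $\le_P\mu$ and whose defect is strictly smaller than that of $f$ — morally, the non-minimal last block has been de-compressed one notch by absorbing a leading size-$1$ block (which lowers the block count), and any block that splits off in the process only makes the partition type strictly smaller. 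Granting this, the induction hypothesis applies to $\Delta(D^{\nwarrow})$ and $\Delta(D^{\searrow})$, hence to $2f$, hence to $f$ (we are in characteristic $0$), placing $f$ in $I_{<d}$ plus the ideal generated by minimal staircase forms of bidegree $(d_1,d_2)$ and partition type $\le_P\mu$.

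The step I expect to be the main obstacle is precisely this last verification, together with the bookkeeping in the normalization stage: one must follow very carefully how the block-diagonal structure, the point-degrees, and the individual $x,y$-coordinates transform under Relations 1--3 — each of which is fairly rigid on its own — confirm that standard order and the staircase shape survive every step, and exhibit a monovariant (partition type, then defect) that strictly decreases under one full cycle of the reduction. The combinatorics involved is elementary but delicate, and the sole purpose of the large-parameter hypotheses $n\ge 8k+5$, $d_1,d_2\ge(2k+1)n$ is to guarantee that the side conditions of the three relations can always be arranged.
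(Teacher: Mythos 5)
There is a genuine gap at the heart of your inductive step. Your monovariant does not decrease under Relation 3: the moves $D\to D^{\nwarrow}$ and $D\to D^{\searrow}$ only shift points along their own anti-diagonals (each $P_i$ is changed by $(\pm1,\mp1)$ or left alone), so the degree sequence $|P_1|,\dots,|P_n|$ is unchanged. Since the block diagonal shape, the partition type, and your ``defect'' are all determined by that degree sequence, $\Delta(D^{\nwarrow})$ and $\Delta(D^{\searrow})$ have exactly the same partition type and the same defect as $f=\Delta(D)$; in particular the non-minimal last block is not ``de-compressed by absorbing a size-$1$ block,'' and the block count does not drop. So the inductive hypothesis never applies to the two new terms and the induction does not close. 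A secondary issue: your normalization step wants to move a non-minimal block to the last position with the Permuting Lemma, but that lemma only lets a block pass another block whose $\alpha$-coordinates are at least the size of the block being passed, which is not guaranteed for the blocks sitting to the right of the non-minimal one; the paper's argument is arranged so as never to need this.

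What actually makes the reduction work in the paper are two mechanisms absent from your proposal. First, the ``Moreover'' clause of Lemma~\ref{lem:powerful}: when the last block of $B(S)$ is \emph{not} minimal (or $s_{n-t+1}>n-t_0$), one gets the one-sided relation $\Delta(D)\sim\Delta(D^{\searrow})$ modulo $I_{<d}$ and staircase forms of strictly lower partition type, with no $\nwarrow$ term. This lets you slide a point of the last block southeast repeatedly until it collides with the next point of the same degree, at which point the determinant is identically zero; hence $f\sim 0$ modulo $I_{<d}$ plus lower partition types, and the lower types are handled by induction on $\le_P$. Second, when the last block $B_s$ \emph{is} minimal but $S$ is not, the paper factors $\Delta(D)=\Delta(D_|)\cdot\det(B_s)$ with $D_|=\{P_1,\dots,P_{n-t_0}\}$ and applies the proposition to the truncated configuration, after checking that $n'=n-t_0\ge 8k'+5$ and $d_1',d_2'\ge(2k'+1)n'$ (using $k\ge k'+t_0-1$); this is where the constants are really spent, not in the degree-shuffling you describe. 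Your outline contains neither the collision-to-zero argument nor the factor-off-the-minimal-last-block induction, and with the symmetric relation $2\Delta(D)\sim\Delta(D^{\nwarrow})+\Delta(D^{\searrow})$ alone the proposition cannot be reached.
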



\begin{prop}\label{staircaseformofpartition}
Suppose $n\ge 8k+5$, $d_1,d_2\ge (2k+1)n$
and fix a partition $\mu=\sum m_i j_i\in\Pi_k$. Then any minimal staircase form of
partition type $\mu$ generates all the minimal staircase forms of the same partition
type $\mu$, modulo the ideal
$$I_{<d}+(\hbox{ minimal staircase forms of partition types $<_P\mu$}).$$
\end{prop}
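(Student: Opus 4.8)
The plan is to fix, once and for all, a single ``canonical'' minimal staircase form $D_{\mu}$ of partition type $\mu$ and bidegree $(d_1,d_2)$, and to prove that \emph{every} minimal staircase form $D$ of type $\mu$ and bidegree $(d_1,d_2)$ satisfies
$$\Delta(D)\ \sim\ c_{D}\,\Delta(D_{\mu})\quad\text{modulo }I_{<d}+(\,\text{minimal staircase forms of types }<_{P}\mu\,)$$
for a scalar $c_{D}\in\C$ (tracking the orientations of the determinants along the way shows $c_{D}\neq 0$); from this the proposition follows, since generation modulo a fixed ideal is transitive. I would take $D_{\mu}$ to be the form in which the blocks of $B(S)$ appear in a fixed canonical order (the non-trivial blocks occupying the last positions, in order of increasing part) and in which, within that layout, the $x$-content sits as far to the right and the $y$-content as far to the left as the staircase shape allows. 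Recall that, since $k=|\mu|$ is the deficit, a minimal staircase form of type $\mu$ has at most $k$ non-trivial blocks, each of size at most $k+1$, hence at least $n-2k$ trivial (size-one) blocks.

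The reduction of an arbitrary $D$ to $D_{\mu}$ I would organize by a well-founded measure $\phi$ on minimal staircase forms of type $\mu$ and bidegree $(d_1,d_2)$ --- for instance a weighted count of the $x$-content lying to the left of its canonical position, plus a term recording how far the block order is from the canonical one --- normalized so that $\phi$ attains its minimum at $D_{\mu}$. The claim to establish is that whenever $D\neq D_{\mu}$ there is a short sequence of applications of Relations~1--3 which, modulo the ideal displayed above, rewrites $\Delta(D)$ as a $\C$-linear combination of $\Delta(D_{i})$ with all $\phi(D_{i})<\phi(D)$; induction on the value of $\phi$ then completes the argument. Three kinds of move are available. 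First, the Permuting Lemma (Lemma~\ref{lem:MinorsPermutingLemma}) and its mirror obtained by interchanging $x$ and $y$ transpose two neighbouring blocks and thereby drive the block order toward the canonical one; moving a block $B$ of size $m$ to the left past a block of size $\ell$ requires every point of $B$ to have $x$-coordinate $\geq\ell$ (or, for the mirror version, $y$-coordinate $\geq\ell$). Second, the Transfactor Lemma (Lemma~\ref{lem:transfactor}), applied in the configuration $j=i+1$ of three consecutive full positions, slides one unit of $x$-content against one unit of $y$-content between two adjacent trivial blocks. Third, Relation~3 (Lemma~\ref{lem:powerful}) --- once the block whose content we wish to change has been brought, using the first move, to the last $j+1$ positions and $P_{2}$ has been normalized to $(1,0)$ --- exchanges content between that non-trivial block and the trivial blocks near the front, at the cost of the identity $2\Delta(D)\sim\Delta(D^{\nwarrow})+\Delta(D^{\searrow})$ in which both $D^{\nwarrow}$ and $D^{\searrow}$ have $\phi$-value strictly less than $\phi(D)$.

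A recurring technical point is that applying one of these relations (usually after re-sorting the columns of the resulting matrix into standard order) may produce a staircase form that is no longer a minimal one of type $\mu$. Whenever this happens I would re-run the staircase algorithm of Definition-Proposition~\ref{definitionofstaircaseform} and invoke Proposition~\ref{staircaseformofnonminimal} to rewrite the result, modulo $I_{<d}$, as a $\C$-combination of minimal staircase forms of partition types $\leq_{P}$ that of the intermediate form: the summands of type strictly below $\mu$ are absorbed into the ideal appearing in the statement, and the summands of type $\mu$ are fed back into the induction on $\phi$ (one checks that the staircase algorithm does not increase $\phi$).

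The hard part will be untangling the circular dependence between the block-reordering move and the content-moving moves: the Permuting Lemma needs a block all of whose points already carry enough $x$- (or $y$-) content, but producing such a block is itself a redistribution problem that, through Relation~3, presupposes a particular block order and the pattern $P_{2}=(1,0)$. Scheduling the block normalizations and the content routing so that every hypothesis of every relation holds at the instant it is invoked --- and verifying that the slack provided by $n\geq 8k+5$ (enough trivial blocks to route content through and to accommodate the $j+1\le k+1$ trivial blocks that Relation~3 demands at the front) and by $d_1,d_2\geq(2k+1)n$ (enough $x$- and $y$-weight that any block one needs to move can be arranged to have all of its coordinates on one side at least its neighbour's size) is exactly sufficient --- is the combinatorial core of the proof and the origin of the explicit constants $c_1=8k+5$ and $c_2=2k+1$.
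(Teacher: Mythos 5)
Your plan has a step that fails as stated, and it misses the actual crux of the paper's argument. The failing step: you want to use Relation~3 as a $\phi$-decreasing rewriting rule, asserting that both $D^{\nwarrow}$ and $D^{\searrow}$ have strictly smaller measure. But the two output terms move the point $P_{n-t+1}$ in opposite horizontal directions: for the statistic $a(D)=\alpha_{n-t_0+2}-\alpha_{n-t_0+1}$ used in the paper one has $a(D^{\nwarrow})=a(D)+1$ and $a(D^{\searrow})=a(D)-1$, so any content-type measure that one of them decreases the other will typically increase. The identity $2\Delta(D)\sim\Delta(D^{\nwarrow})+\Delta(D^{\searrow})$ is a three-term (second-difference) relation, not a directed move, and no well-founded induction of the kind you describe can be extracted from it directly.

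More fundamentally, the scalars $c_D$ are not signs, so ``tracking orientations'' cannot establish $c_D\neq 0$. The paper shows that minimal staircase forms of the same type and bidegree satisfy $\Delta(D')/a(D')\sim\Delta(D'')/a(D'')$, i.e.\ the class of $\Delta(D)$ modulo $I_{<d}$ and lower types scales \emph{linearly} with the integer $a(D)$; the proportionality constant $a(D')/a(D'')$ is in general not $\pm1$ (and forms with $a(D)=0$ lie in the lower ideal). Proving this scaling is the heart of the proof: one first handles $a=1$ by exploiting the degenerate case $P_{n-t_0+1}=P_{n-t_0+2}$ (where $\Delta(D)=0$, so $\Delta(D^{\nwarrow})\sim-\Delta(D^{\searrow})$, which lets one transport content between the last block and the size-one blocks while keeping $a=1$) together with an induction on the deficit $k$ applied to the first $n-t$ points (base case Lemma \ref{lem:base case}); then one proves $a(D'')\,\Delta(D')\sim\Delta(D'')$ by induction on $a(D'')$ using Lemma \ref{lem:powerful} as a recursion ($\Delta(D'')\sim 2k\,\Delta(D')-(k-1)\Delta(D')$). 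Your proposal contains none of this coefficient bookkeeping, and without it the nonvanishing of $c_D$ --- which is exactly what ``any one minimal staircase form generates all the others'' means --- is unproved; the scheduling of hypotheses of Relations~1--3, which you identify as the core difficulty, is handled in the paper by routine uses of the Transfactor and Permuting Lemmas and is not where the real work lies. (Also, your appeal to Proposition \ref{staircaseformofnonminimal} inside this proof is legitimate only if the two propositions are organized as a joint induction on the deficit, since that proposition's proof itself invokes the generation statement for smaller deficit.)
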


\begin{cor}\label{lessthanpartitionnumber}
Suppose $n\ge 8k+5$, $d_1,d_2\ge (2k+1)n$. Then $M_{d_1,d_2}$ can be
generated by $p(k)$ elements $\{\det(S_\mu)\}_{\mu\in\Pi_k}$, where
for each partition $\mu\in\Pi_k$, $S_\mu$ is an arbitrary minimal
staircase form of bidegree $(d_1,d_2)$ and of partition type $\mu$.
In particular, $\dim M_{d_1,d_2}\le p(k)$.
\end{cor}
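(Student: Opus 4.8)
The plan is to deduce the statement from Propositions~\ref{staircaseformofnonminimal} and~\ref{staircaseformofpartition} by a descending induction on the poset $(\Pi_k,\le_P)$, after reducing from arbitrary alternating polynomials to staircase forms. Throughout, write $d=d_1+d_2=\binom{n}{2}-k$.

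First I would set up the spanning set. By Haiman's theorem the ideal $I$ is generated by $\{\Delta(D)\}_{D\in\mathfrak D}$, so the graded piece $M_{d_1,d_2}$ is spanned over $\C$ by the images of those $\Delta(D)$ with $\deg_{\mathbf x}\Delta(D)=d_1$ and $\deg_{\mathbf y}\Delta(D)=d_2$. By Definition-Proposition~\ref{definitionofstaircaseform}, each such $\Delta(D)$ satisfies $\Delta(D)\sim\det S$ for a staircase form $S$ of the same bidegree; and by Corollary~\ref{trivialdet}, if $s_j>j-1$ for some $j$ then $\Delta(D)\in I_{<d}$ and its image in $M$ vanishes. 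Hence $M_{d_1,d_2}$ is spanned by the images of $\det S$ as $S$ runs over nonzero staircase forms of bidegree $(d_1,d_2)$ with $s_j\le j-1$ for all $j$; each such $S$ has a well-defined partition type, and since its deficit equals $\binom{n}{2}-d=k$, that type lies in $\Pi_k$. I would also record that $I_{<d}$ contributes nothing in bidegree $(d_1,d_2)$: a homogeneous element of $I$ of degree $<d$ must be multiplied by a polynomial of positive degree to reach total degree $d$, hence lands in $(\mathbf x,\mathbf y)I$; so the relations ``$\sim$'' and ``lies in $I_{<d}+(\cdots)$'' appearing below may all be read inside the vector space $M_{d_1,d_2}$.

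Next I would fix, for every $\mu\in\Pi_k$, a minimal staircase form $S_\mu$ of bidegree $(d_1,d_2)$ and partition type $\mu$: one first chooses block sizes realizing $\mu$ (possible since $n\ge 8k+5$ leaves ample room) and then distributes the exponents of each point between $x$ and $y$ to attain $(d_1,d_2)$, which is possible because $0\le d_1\le d_1+d_2=\sum_i|P_i|$; if for some $\mu$ no such form exists the argument only uses fewer than $p(k)$ generators. I would then prove by induction on $\mu$ with respect to $\le_P$ the claim: \emph{every nonzero staircase form of bidegree $(d_1,d_2)$ and partition type $\mu$ lies in the $\C$-span of $\{\det S_\nu:\nu\le_P\mu\}$ in $M_{d_1,d_2}$}. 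Indeed, Proposition~\ref{staircaseformofnonminimal} puts such a form into $I_{<d}$ plus the span of minimal staircase forms of bidegree $(d_1,d_2)$ of types $\le_P\mu$; Proposition~\ref{staircaseformofpartition} rewrites each minimal staircase form of type exactly $\mu$ in terms of $\det S_\mu$ modulo $I_{<d}$ and minimal staircase forms of types $<_P\mu$; and every minimal staircase form of a type $\nu<_P\mu$ is itself a staircase form of type $\nu$, so the inductive hypothesis applies to it, yielding the span of $\{\det S_{\nu'}:\nu'\le_P\nu\}\subseteq\{\det S_{\nu'}:\nu'\le_P\mu\}$. Since $\Pi_k$ is finite the induction is well-founded. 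Applying the claim to every $\mu\in\Pi_k$ and combining with the spanning statement above shows that $M_{d_1,d_2}$ is spanned by $\{\det S_\mu\}_{\mu\in\Pi_k}$, a set of $|\Pi_k|=p(k)$ elements; in particular $\dim_\C M_{d_1,d_2}\le p(k)$.

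The genuine content is entirely contained in Propositions~\ref{staircaseformofnonminimal} and~\ref{staircaseformofpartition} (proved in \S3), so within this corollary there is no real obstacle; the only point requiring care is the reconciliation already noted — those two propositions are stated as ideal-membership facts ``modulo $I_{<d}+(\cdots)$'', whereas we want $\C$-linear spanning inside the single bidegree $(d_1,d_2)$. Because every generator appearing in those statements has bidegree exactly $(d_1,d_2)$ and $M=I/(\mathbf x,\mathbf y)I$ kills all products with polynomials of positive degree, the two notions coincide in this bidegree, which is what permits the descending induction to be run directly in $M_{d_1,d_2}$.
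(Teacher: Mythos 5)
Your argument is correct and is exactly the route the paper takes: the paper's proof of Corollary~\ref{lessthanpartitionnumber} is simply that it follows immediately from Propositions~\ref{staircaseformofnonminimal} and~\ref{staircaseformofpartition}, and your write-up just makes explicit the reduction to staircase forms, the induction over $(\Pi_k,\le_P)$, and the bidegree bookkeeping that lets the ideal-membership statements be read as $\C$-linear spanning in $M_{d_1,d_2}$. No gaps; you have merely spelled out what the authors left as ``immediate.''
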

\begin{proof}
It is an immediate consequence of Proposition~\ref{staircaseformofnonminimal} and Proposition~\ref{staircaseformofpartition}.
\end{proof}

\section{proof of main theorem} This section is the most technical part of the paper. First we give Transfactor Lemma (Lemma \ref{lem:transfactor}) and Minors Permuting Lemma (Lemma \ref{lem:MinorsPermutingLemma}), which are simple but powerful tools to modify $D\in\mathfrak{D}$ of degree $d$ to another $D'\in\mathfrak{D}$ such that $\Delta(D)\sim\Delta(D')$ modulo $I_{<d}$. Then we prove Lemma \ref{lem:powerful} which gives a relation among the determinants of $D$ and certain modifications of $D$. After this lemma is established, we shall prove Proposition \ref{staircaseformofpartition}, Proposition \ref{staircaseformofnonminimal} and then the main theorem (Theorem \ref{mainthm}).

\begin{lem}[Transfactor Lemma]\label{lem:transfactor} Let $D=\{P_1,\dots,P_n\}\in\mathfrak{D}$ where $P_i=(\alpha_i,\beta_i)\in\N\times\N$ and define $\deg P_i:=s_i (=\alpha_i+\beta_i)$ for $1\le i\le n$ and $d=\sum s_i$. Define $s_{n+1}=n$.  Suppose $1\le i\neq j\le n$ are two integers satisfying $s_i=i-1$, $s_{i+1}=i$, $s_j=j-1$, $s_{j+1}=j$, $\beta_i>0$, $\alpha_j>0$. Define
$$D'=\{P_1, \dots, P_{i-1},
P_i+(1,-1),P_{i+1},\dots, P_{j-1},
P_j+(-1,1),P_{j+1},\dots,P_n\}.$$Then $\Delta(D)\sim\Delta(D')$ modulo $I_{<d}$.
\end{lem}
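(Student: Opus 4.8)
The plan is to exploit the staircase-form machinery from Definition-Proposition \ref{definitionofstaircaseform}. Since $s_i=i-1$ and $s_j=j-1$, both $P_i$ and $P_j$ sit on the diagonal $s_\ell=\ell-1$, so in the staircase form $S$ of $D$ the $i$-th column has exactly $i-1$ leading zeros and the $j$-th column has exactly $j-1$ leading zeros; similarly the hypotheses $s_{i+1}=i$, $s_{j+1}=j$ pin down the adjacent columns. The key observation is that the conditions $\beta_i>0$ and $\alpha_j>0$ guarantee we have the freedom, when building the staircase form, to end the elimination in the $i$-th column with a $y$-factor as the last one extracted and in the $j$-th column with an $x$-factor as the last one extracted. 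So I would first write $\Delta(D)\sim\det S$ where the first nonzero entry of column $i$ is a product of $s_i=i-1$ factors, the last of which is a $y$-difference $y_{i,\ell}$, and symmetrically for column $j$.

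Next I would perform a column operation. The entry of $D'$ differs from $D$ only in that $P_i\mapsto P_i+(1,-1)$ and $P_j\mapsto P_j+(-1,1)$; in terms of monomials, the $i$-th column monomial $x^{\alpha_i}y^{\beta_i}$ becomes $x^{\alpha_i+1}y^{\beta_i-1}$ and the $j$-th column monomial becomes $x^{\alpha_j-1}y^{\beta_j+1}$. I expect the mechanism to be: in the staircase form, the nonzero part of column $i$ is (up to the common prefix of $x$- and $y$-differences) $y_{i_0,\ell}$-type entries, while column $j$ has $x_{j_0,m}$-type entries; one multiplies column $i$ by an appropriate $x$-variable and column $j$ by an appropriate $y$-variable (or adds suitable multiples of other columns), producing exactly the staircase form of $D'$ plus correction terms. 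Each correction term is a determinant of a matrix one of whose columns has strictly more leading zeros, hence — by an argument like Corollary \ref{trivialdet} — lies in $I_{<d}$. The running total degree is preserved ($D$ and $D'$ have the same degree $d$ since $(1,-1)$ and $(-1,1)$ change $|P_i|,|P_j|$ by zero), so "$\sim$" is the right equivalence.

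Concretely, I would organize the computation so that only columns $i$ and $j$ are touched. Write the nonzero (i.e. below-the-leading-zeros) part of column $i$ of $S$ as $x_{\bullet,1}\cdots x_{\bullet,\alpha_i}\,y_{\bullet,\alpha_i+1}\cdots y_{\bullet,s_i}$; because $\beta_i>0$ the last factor is a $y$-difference $y_{t,s_i}$ for each row $t>s_i$. Replacing $y_{t,s_i}$ by $x_t - x_{s_i}$-free manipulation: note $x_t\cdot(\text{column }i\text{ with its last }y\text{-factor stripped})$ equals column $i$ of the $D'$-staircase form plus $x_{s_i}\cdot(\ldots)$, and the latter, being column $s_i$'s worth of data in a new column while column $s_i$ is already present, contributes a determinant with a repeated-pattern column that has too many leading zeros, hence is in $I_{<d}$. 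Doing the mirror-image manipulation on column $j$ (using $\alpha_j>0$, stripping the last $x$-factor and multiplying by the appropriate $y_t$) yields $\Delta(D')$. Both operations commute since they affect disjoint columns. The main obstacle I anticipate is bookkeeping: being careful that the specific staircase form chosen for $D$ really can be taken to have the required last-factor structure in columns $i$ and $j$ simultaneously (this is exactly what $\beta_i>0$ and $\alpha_j>0$ buy us, together with the freedom in the order of operations noted in the proof of Definition-Proposition \ref{definitionofstaircaseform}), and verifying that every correction term genuinely has a column with $\ge \ell$ leading zeros in column-position $\ell-1$ for some $\ell$, so that its determinant vanishes or at worst lands in $I_{<d}$.
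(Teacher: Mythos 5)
There is a genuine gap, and it sits exactly at the heart of the lemma. Your plan treats the two column modifications as independent: strip the last $y$-factor from column $i$, reinsert an $x$-factor, absorb the correction into $I_{<d}$, and then do the mirror image at column $j$, with the claim that the two operations ``commute since they affect disjoint columns.'' But the step that actually trades a $y$-degree for an $x$-degree inside column $i$ is never justified, and it cannot be justified column-by-column. Writing $\tilde C_i$ for the stripped column, your manipulations only give $\det S \equiv \det(\dots,(y_t\tilde C_i(t))_t,\dots)$ and $\det S' \equiv \det(\dots,(x_t\tilde C_i(t))_t,\dots,(y_t\tilde C_j(t))_t,\dots)$ modulo $I_{<d}$ (and note these corrections lie in $I_{<d}$ because they are a fixed variable $x_{s_i}$ or $y_{s_i}$ times a degree-$(d-1)$ determinant that lies in $I$, not because of a Corollary~\ref{trivialdet}-type ``too many leading zeros'' count --- the stripped column still has only $i-1$ leading zeros). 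Nothing in your argument connects the ``$y_t$-version'' of column $i$ to its ``$x_t$-version.'' Indeed no such single-column argument can exist: if the move at column $i$ alone produced only corrections in $I_{<d}$, then $\Delta(D)$ would be equivalent to the determinant of a configuration with $P_i$ moved but $P_j$ untouched, which has a different bidegree; since $I_{<d}$ is bihomogeneous this would force both classes to vanish in $M$, which is false in general. The relation is genuinely a coupled exchange between positions $i$ and $j$, and your hypotheses $s_{i+1}=i$, $s_{j+1}=j$ --- which you never use --- are what make that coupling tractable.

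The paper's proof supplies exactly the missing ingredients. Because $s_i=i-1$, $s_{i+1}=i$ (and likewise at $j$), the $i$-th and $j$-th positions are size-$1$ blocks of the block diagonal form, so $\det S$ and $\det S'$ factor with the $(i,i)$- and $(j,j)$-entries, chosen as $y_{i1}\prod_{t=2}^{i-1}a_{it}$, $x_{j1}\prod_{t=2}^{j-1}a_{jt}$ and $x_{i1}\prod_{t=2}^{i-1}a_{it}$, $y_{j1}\prod_{t=2}^{j-1}a_{jt}$ respectively, as literal factors times a common $f_0$. The difference then collapses to
$$\det(S)-\det(S')=-\det\begin{bmatrix}1&x_1&y_1\\1&x_i&y_i\\1&x_j&y_j\end{bmatrix}\Big(\prod_{t=2}^{i-1}a_{it}\Big)\Big(\prod_{t=2}^{j-1}a_{jt}\Big)f_0,$$
and the decisive point is that (say for $i<j$) the factor $a_{ji}$ occurs in $\prod_{t=2}^{j-1}a_{jt}$, so the whole difference is $a_{ji}$ times a degree-$(d-1)$ polynomial lying in $I$ (the $3\times3$ alternating determinant handles coincidences among points $1,i,j$, the remaining factors the rest); hence the difference is in $I_{<d}$. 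This divisibility-by-$a_{ji}$ argument, not a leading-zero count, is what your proposal is missing, and without it the proof does not go through.
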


\begin{proof} By performing appropriate operations as in
Definition-Proposition \ref{definitionofstaircaseform}, we can
obtain a staircase form $S$ of $D$ (resp. staircase form $S'$ of
$D'$), such that the $(i,i)$-entry and $(j,j)$-entry of $S$ (resp.
$S'$) are $y_{i1}\prod_{t=2}^{i-1}a_{it}$ and
$x_{j1}\prod_{t=2}^{j-1}a_{jt}$ (resp.
$x_{i1}\prod_{t=2}^{i-1}a_{it}$ and
$y_{j1}\prod_{t=2}^{j-1}a_{jt}$). The block diagonal forms of $S$
and $S'$ only differ at two blocks of size 1 located at the
$(i,i)$-entry and $(j,j)$-entry. Let $f_0$ be the product of
determinants of all blocks of $B(S)$ except the $(i,i)$-entry and
$(j,j)$-entry. Then $\Delta(D)-\Delta(D')$ is equivalent to the
following modulo $I_{<d}$,
$$\aligned\det(S)-\det(S')&=\Big{(}y_{i1}\prod_{t=2}^{i-1}a_{it}\Big{)}\Big{(}x_{j1}\prod_{t=2}^{j-1}a_{jt}\Big{)}f_0
-\Big{(}x_{i1}\prod_{t=2}^{i-1}a_{it}\Big{)}\Big{(}y_{j1}\prod_{t=2}^{j-1}a_{jt}\Big{)}f_0\\
&=-\det\begin{bmatrix}1&x_1&y_1\\1&x_i&y_i\\1&x_j&y_j\end{bmatrix}\Big{(}\prod_{t=2}^{i-1}a_{it}\Big{)}\Big{(}\prod_{t=2}^{j-1}a_{jt}\Big{)}f_0.\endaligned$$
Without loss of generality, assume $i<j$. Since
$$I=\bigcap_{1\leq i<j\leq n}(x_i-x_j, y_i-y_j),$$it is easy to see
that $(\det(S)-\det(S'))/a_{ji}$ is a polynomial in $I_{<d}$ and then the lemma follows.
\end{proof}

The Transfactor Lemma immediately leads to the proof of the following lemma, which is the base case $k=0$ of the inductive proof of Proposition \ref{staircaseformofpartition}.

\begin{lem}\label{lem:base case}
Let $d_1, d_2$ be two non-negative integers such that
$d_1+d_2=n(n-1)/2$. Then $M_{d_1, d_2}$ is generated by any single
nonzero staircase form modulo $I_{<n(n-1)/2}$.
\end{lem}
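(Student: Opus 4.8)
The plan is to deduce this almost immediately from the Transfactor Lemma, once one records what the hypothesis $d_1+d_2 = n(n-1)/2$ forces. First I would note that here the deficit is $k=0$, so any $D\in\mathfrak D$ of bidegree $(d_1,d_2)$ that contributes to $M_{d_1,d_2}$ has degree $d=n(n-1)/2$; passing to a staircase form $S$ of $D$ (Definition–Proposition \ref{definitionofstaircaseform}) and sorting its columns so that $s_1\le\cdots\le s_n$, Corollary \ref{trivialdet} shows $\det S=0$ (hence $\Delta(D)\in I_{<d}$) unless $s_j\le j-1$ for all $j$. Since $\sum_j s_j=\sum_j(j-1)$, this forces $s_j=j-1$ for every $j$. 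Such a $D$ therefore has exactly one point of each degree $0,1,\dots,n-1$, is automatically in standard order, and is recorded by a single vector $(\alpha_1,\dots,\alpha_n)$, where $\alpha_j$ is the $x$-coordinate of the unique point of degree $j-1$; this vector satisfies $0\le\alpha_j\le j-1$ and $\sum_j\alpha_j=d_1$, and conversely every such vector comes from such a $D$.

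Next I would invoke the Transfactor Lemma. For a $D$ of the above type, all of the hypotheses of Lemma \ref{lem:transfactor} that concern the $s$'s hold automatically (using $s_{n+1}=n$), so, given indices $i\ne j$ with $\beta_i>0$ and $\alpha_j>0$, the lemma produces $D'$ of the same type with $\Delta(D)\sim\Delta(D')$, where the vector of $D'$ is obtained from that of $D$ by replacing $\alpha_i,\alpha_j$ with $\alpha_i+1,\alpha_j-1$. One checks at once that the new vector is again in the range $0\le\alpha_\ell\le\ell-1$ with the same sum $d_1$, and that $D'\in\mathfrak D$: no two of its points collide, since it still has exactly one point of each degree $0,\dots,n-1$. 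Thus, modulo $I_{<d}$, the Transfactor Lemma realizes every ``elementary move'' $\alpha\mapsto\alpha+e_i-e_j$ on the set of valid vectors.

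Finally I would observe that any two valid vectors $\alpha,\alpha'$ are joined by a chain of such elementary moves: if $\alpha\ne\alpha'$, pick $j$ with $\alpha_j>\alpha_j'$ and $i$ with $\alpha_i<\alpha_i'$ (these exist and are distinct because the two vectors have the same sum), so that $\alpha_j\ge 1$, i.e.\ $\alpha_j>0$, and $\alpha_i\le\alpha_i'-1\le i-2$, i.e.\ $\beta_i>0$; the move $\alpha\mapsto\alpha+e_i-e_j$ is then admissible and strictly decreases $\sum_\ell|\alpha_\ell-\alpha_\ell'|$. Consequently all $\Delta(D)$ with $D$ of the above type and bidegree $(d_1,d_2)$ have the same image in $M_{d_1,d_2}$; moreover each such $\Delta(D)$ is $\sim$ to every staircase form of $D$, and every nonzero staircase form of bidegree $(d_1,d_2)$ arises in this way (a staircase form with some $s_j>j-1$ vanishes). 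Hence all nonzero staircase forms of bidegree $(d_1,d_2)$ have one and the same image $v$ in $M_{d_1,d_2}$. Since $M_{d_1,d_2}$ is spanned by the images of the $\Delta(D)$ of bidegree $(d_1,d_2)$, each of which is either $0$ or $v$, we get $M_{d_1,d_2}=\C\cdot v$, which is generated by any single nonzero staircase form.

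I do not expect a serious obstacle here: the statement really does fall out of the Transfactor Lemma. The only points needing a little care are the bookkeeping that each elementary move keeps the configuration inside $\mathfrak D$ and inside the valid coordinate range (handled by the ``one point of each degree'' observation), and the elementary fact that the valid vectors form a connected graph under the moves $\alpha\mapsto\alpha+e_i-e_j$.
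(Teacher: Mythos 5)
Your proof is correct and follows essentially the same route as the paper: force $s_j=j-1$ for all $j$ from the vanishing of staircase forms with $s_j>j-1$, then connect any two such configurations by repeated applications of the Transfactor Lemma. The only difference is that you make explicit the elementary-move/connectivity bookkeeping that the paper compresses into ``by repeatedly applying Transfactor Lemma we can easily deduce,'' which is a welcome but not essentially different elaboration.
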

\begin{proof}
Let $S$ be a staircase form with $\det S\neq 0$ and bidegree $(d_1,d_2)$. Because $d_1+d_2=n(n-1)/2$, there are $n(n-1)/2$  zeros in the staircase form $S$. Since $\det S\neq 0$, $S$ and its block diagonal form $B(S)$ must be of the following form
$$S=\tiny{\left[
                                   \begin{array}{ccccc}
                                      * & 0 &\cdots &0 &0\\
                                      * & * &\cdots &0 &0\\
                                      \vdots & \vdots &\ddots &\vdots&\vdots\\
                                      * & * &\cdots &* &0\\
                                      * & * &\cdots &* &*\\
                                                                 \end{array}
                                  \right],
                                 }\quad\normalsize{\text{ $B(S)$=}}
                                  \tiny{\left[
                                   \begin{array}{ccccc}
                                      * & 0 &\cdots &0 &0\\
                                      0& * &\cdots &0 &0\\
                                      \vdots & \vdots &\ddots &\vdots&\vdots\\
                                      0& 0 &\cdots &* &0\\
                                      0 & 0 &\cdots &0 &*\\
                                                                 \end{array}
                                  \right].}
                                  $$
By repeatedly applying Transfactor Lemma we can easily deduce the following assertion: if $S'$ is a staircase form of another $D'$ of the same bidegree ($d_1,d_2$) as $D$, then $\det B(S')\sim\det B(S)$ modulo $I_{<n(n-1)/2}$. The lemma follows from this assertion.
\end{proof}

\begin{lem}[Minors Permuting Lemma]\label{lem:MinorsPermutingLemma}Let $D=\{P_1,\dots,P_n\}\in\mathfrak{D}$ where $P_i=(\alpha_i,\beta_i)$. Suppose $h, \ell$ and $m$ are
positive integers satisfying
$2\le h<h+\ell+m\le n+1$,  $s_h=h-1,s_{h+\ell}=h+\ell-1$, $s_{h+\ell+m}=h+\ell+m-1$
(this condition holds if
$h+\ell+m=n+1$ since we assume $s_{n+1}=n$) and suppose that
$\alpha_{h+\ell},...,\alpha_{h+\ell+m-1}\geq \ell$. Define
$$\aligned D'=\{&P_1,P_2, \dots, P_{h-1},
P_{h+\ell}-(\ell,0),P_{h+\ell+1}-(\ell,0),\dots,P_{h+\ell+m-1}-(\ell,0),\\
&P_{h}+(m,0),P_{h+1}+(m,0),\dots,P_{h+\ell-1}+(m,0),
P_{h+\ell+m},\dots,P_{n}\}.\endaligned$$
Then $\Delta(D)\sim\Delta(D')$  modulo $I_{<d}$.
\end{lem}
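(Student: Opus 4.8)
My plan is to localize the comparison to the columns $h,\dots,h+\ell+m-1$, to use the hypothesis $\alpha_{h+\ell},\dots,\alpha_{h+\ell+m-1}\ge\ell$ to pull a common block of $x$-factors out of these columns, and then to conclude by the device from the proof of the Transfactor Lemma: divide the relevant difference by a linear form that is visibly common to both sides, and check that the quotient lies in every prime $(x_p-x_q,y_p-y_q)$, hence in $I$ and so in $I_{<d}$.

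\emph{Localization.} We may and do assume $s_j\le j-1$ for all $j$ (for $D$, hence also for $D'$; otherwise $\Delta(D),\Delta(D')\in I_{<d}$ by Corollary \ref{trivialdet}). Form a staircase form $S$ of $D$ via Definition-Proposition \ref{definitionofstaircaseform} and pass to its block diagonal form $B(S)$ (Definition \ref{df:block diagonal}). Since $s_h=h-1$, $s_{h+\ell}=h+\ell-1$, $s_{h+\ell+m}=h+\ell+m-1$, the indices $h$, $h+\ell$, $h+\ell+m$ are block boundaries, so $B(S)=\mathrm{diag}(L,M_A,M_B,R)$, where $M_A$, $M_B$ are the (block diagonal) submatrices on rows and columns $\{h,\dots,h+\ell-1\}$ and $\{h+\ell,\dots,h+\ell+m-1\}$, and $L$, $R$ are the parts supported outside $\{h,\dots,h+\ell+m-1\}$. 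A degree count — the active points of $D$ have degrees in $[h-1,h+\ell+m-2]$, the earlier ones have degrees $\le h-2$, and the later ones have degrees $\ge h+\ell+m-1$ — shows that putting $D'$ into standard order only permutes the points in positions $h,\dots,h+\ell+m-1$, keeps $h$, $h+m$, $h+\ell+m$ as block boundaries, and sends $P_{h+\ell}-(\ell,0),\dots,P_{h+\ell+m-1}-(\ell,0)$ to positions $h,\dots,h+m-1$ and $P_h+(m,0),\dots,P_{h+\ell-1}+(m,0)$ to positions $h+m,\dots,h+\ell+m-1$. Hence $D'$ has a staircase form whose block diagonal form is $\mathrm{diag}(L,M_{B'},M_{A'},R)$ with the \emph{same} $L$ and $R$ (those blocks involve only columns and peeling operations the move does not touch). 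Since the determinant of a block diagonal matrix is the product of the block determinants and is insensitive to their order, it remains to show
$$\det(M_A)\det(M_B)\;\sim\;\det(M_{A'})\det(M_{B'})\pmod{I_{<d}}.$$

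\emph{Extracting the common factor.} Using $\alpha_{h+\ell+r}\ge\ell$, peel the columns of $M_B$ so that the column coming from $P_{h+\ell+r}$ carries the factors $(x_i-x_1)(x_i-x_2)\cdots(x_i-x_\ell)$; these depend only on the row index $i$, so pulling one copy out of each row of $M_B$ gives $\det(M_B)=\bigl[\prod_{i=h+\ell}^{h+\ell+m-1}\prod_{t=1}^{\ell}(x_i-x_t)\bigr]\det(\widetilde M_B)$, where $\widetilde M_B$ is again a block diagonal peeled form, for the point-set $\{P_{h+\ell}-(\ell,0),\dots,P_{h+\ell+m-1}-(\ell,0)\}$ but placed on the rows $\{h+\ell,\dots,h+\ell+m-1\}$. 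Symmetrically, because $\alpha_j+m\ge m$, one gets $\det(M_{A'})=\bigl[\prod_{i=h+m}^{h+\ell+m-1}\prod_{t=1}^{m}(x_i-x_t)\bigr]\det(\widetilde M_{A'})$ with $\widetilde M_{A'}$ a peeled form for $\{P_h,\dots,P_{h+\ell-1}\}$ on the rows $\{h+m,\dots,h+\ell+m-1\}$. Substituting reduces the target to a \emph{transition identity}: $\det(M_A)$ and $\det(\widetilde M_{A'})$ are peeled-form determinants of the same point-set on the consecutive row-blocks $\{h,\dots,h+\ell-1\}$ and $\{h+m,\dots,h+\ell+m-1\}$; $\det(M_{B'})$ and $\det(\widetilde M_B)$ likewise for $\{P_{h+\ell}-(\ell,0),\dots\}$ on $\{h,\dots,h+m-1\}$ and $\{h+\ell,\dots,h+\ell+m-1\}$; and one must show that the two ways of pairing these with the transition products over $\{h,\dots,h+\ell+m-1\}$ agree modulo $I_{<d}$.

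\emph{The transition identity, and the main obstacle.} Both sides are now determinants of explicit $(\ell+m)\times(\ell+m)$ matrices in $\{x_i,y_i:h\le i\le h+\ell+m-1\}$ with the same multiset of column degrees; call their difference $\Theta$, homogeneous of degree $\sum_{j\in A\cup B}s_j$. As in the closing paragraph of the proof of Lemma \ref{lem:transfactor}, it suffices to write $\Theta=(x_p-x_q)\,w$ (or $(y_p-y_q)\,w$) with the linear factor visibly common to both sides and $w$ of degree one less lying in every prime $(x_{p'}-x_{q'},y_{p'}-y_{q'})$: then $w$ is a homogeneous element of $I$ of degree $<d$, so $w\in I_{<d}$, hence $\Theta\in I_{<d}$ and finally $\Delta(D)-\Delta(D')\in I_{<d}$. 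I would produce such a factorization by first unpeeling (reversing Definition-Proposition \ref{definitionofstaircaseform}) to reduce to the $\Delta$'s of the two small point-sets in the two variable blocks, then reducing general $(\ell,m)$ to the case of moving one column at a time (an induction in which one checks the block-boundary hypotheses persist and the degree drops by one at each stage), and in that case exhibiting $\Theta$ as $\pm$(an explicit alternant that vanishes whenever the two relevant variable blocks are identified) $\times$ (the transition products) $\times$ (the untouched factors), so that divisibility by a suitable $x_p-x_q$ is manifest and membership of the quotient in each prime follows pair by pair (each surviving factor either vanishes on that prime or is a nonzerodivisor modulo it). Identifying the correct alternant and running this induction cleanly is the genuinely technical heart of the lemma; the localization and the factor extraction are formal by comparison.
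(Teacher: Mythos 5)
Your reductions (pass to a staircase form, split off the untouched blocks $L,R$, and use $\alpha_{h+\ell},\dots,\alpha_{h+\ell+m-1}\ge\ell$ to pull a row-dependent product of $x$-differences out of the second block) reproduce the first half of the paper's argument, but the proof stops exactly where the lemma actually lives. Your ``transition identity'' is the whole content of the statement, and you do not prove it: you outline a plan (unpeel, induct by moving one column at a time, find ``the correct alternant,'' then divide by a linear form $x_p-x_q$ and check the quotient lies in every prime $(x_{p'}-x_{q'},y_{p'}-y_{q'})$), and you yourself flag that identifying the alternant and running the induction is ``the genuinely technical heart of the lemma.'' That heart is missing, and it is not clear it can be supplied in the form you propose: the single-linear-factor division works in the Transfactor Lemma because the two blocks being exchanged there have size $1$ and their difference is visibly a $3\times 3$ alternant times common factors; for blocks of sizes $\ell,m>1$ no such one-step factorization is exhibited, and none is needed in the paper.

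The paper's proof closes the argument without any division-and-membership check. With the common factor taken at the indices $h,\dots,h+\ell-1$ (not $1,\dots,\ell$ as you write --- this matters, because only then do the remaining factor indices of each entry become an initial segment after relabeling, i.e.\ again a staircase entry), one divides the $m$ rows of the second block by $\prod_{j=h}^{h+\ell-1}(x_u-x_j)$ and multiplies the $\ell$ rows of the first block by $\prod_{j=h+\ell}^{h+\ell+m-1}(x_{u}-x_j)$; the total removed and inserted products coincide up to the sign $(-1)^{\ell m}$, so the determinant changes only by that sign. Swapping the two diagonal blocks and relabeling the variable indices by the permutation interchanging the ranges $\{h,\dots,h+\ell-1\}$ and $\{h+\ell,\dots,h+\ell+m-1\}$ (a permutation of sign $(-1)^{\ell m}$, which preserves $I_{<d}$ and acts on alternants by its sign) produces \emph{literally} a block diagonal form of a staircase form of $D'$, and the two signs cancel, giving $\Delta(D)\sim\Delta(D')$. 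So the step you defer to an unspecified induction is replaced in the paper by the observation that the factor-transferred, block-swapped, relabeled matrix is already a staircase form of $D'$; without that observation (or a completed proof of your transition identity), your proposal is a program rather than a proof.
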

\begin{proof}
By performing appropriate operations as in
Definition-Proposition~\ref{definitionofstaircaseform} and using the
assumption that $\alpha_v\ge l$ for $h+\ell\le v\le h+\ell+m-1$ , we
can obtain a staircase form $S$ of $D$,  where the $(u,v)$-entry for
$ h+\ell\le u, v\le h+\ell+m-1$  contains the factor
$\prod_{j=h}^{h+\ell-1} x_{uj}=\prod_{j=h}^{h+\ell-1} (x_u-x_j)$.
Let $B(S)=\hbox{diag}(B_1,B_2,\dots,B_s)$ be the block diagonal form
of $S$, and let $B_r$ (resp. $B_{r+1}$) be the block of size $\ell$
(resp. $m$) whose upper left corner is the $(h,h)$-entry (resp.
$(h+\ell,h+\ell)$-entry). Then by our choice of $S$, all entries in
the $i$-th row ($1\le i\le m$) of $B_{r+1}$ contain
$\prod_{j=h}^{h+\ell-1} x_{i+h+\ell-1,j}$ as  a factor. Dividing the
$i$-th row of $B_{r+1}$ by $\prod_{j=h}^{h+\ell-1} x_{i+h+\ell-1,j}$
for $1\le i\le m$ and multiplying the $i'$-th row of $B_r$ by
$\prod_{j=h+\ell}^{h+\ell+m-1} x_{i'+h-1,j}$ for $1\le i'\le \ell$,
we obtain a new block diagonal matrix
$B'=\hbox{diag}(B_1,\dots,B_{r-1}, B'_r,B'_{r+1},B_{r+2},\dots,
B_s)$. Since
$$\prod_{j=h}^{h+\ell-1} x_{i+h+\ell-1,j}=(-1)^{\ell m}\prod_{j=h}^{h+\ell-1} x_{i+h+\ell-1,j},$$
$$(-1)^{\ell m}\det B'=\det B=\det S.$$
Now interchange the two blocks $B'_r$ and $B'_{r+1}$ in $B'$ and
then change the indices $1,\dots,n$ to
$$1,\dots,(\ell-1),(\ell+h),\dots,(\ell+h+m-1),\ell,\dots,(\ell+h-1),
(\ell+h+m),\dots,n.$$ The resulting matrix is the block diagonal
matrix of a staircase form of $D'$. Notice that when we change the
indices, the determinant of the resulting matrix is equal to $\det
B'$ multiplied by $(-1)^{\ell m}$. Therefore $\Delta(D)$ and
$\Delta(D')$ are equivalent modulo $I_{<d}$.
\end{proof}

\begin{exmp} In Example \ref{eg:minimal staircase},
assume $\alpha_8,\dots,\alpha_{11}\ge 3$. Lemma
\ref{lem:MinorsPermutingLemma} asserts that by permuting the two
blocks (as framed in the following figure) in the block diagonal
form, the determinant is not changed modulo $I_{<d}$. That is to
say, we may permute adjacent blocks provided that the $\alpha_i$'s
in the second block is not less than the size of the first block.

\begin{picture}(100,110)
\put(71,38){\framebox(38,29){}}
\put(114,-1){\framebox(52,38){}}
\put(332,-1){\framebox(38,29){}}
\put(275,29){\framebox(52,38){}}
\put(0,50){
\tiny{
$\left[
                                   \begin{array}{ccccccccccc}
                                      * & 0      &0            &0  & 0 & 0& 0& 0& 0& 0& 0\\
                                      0 & * &0            &0  & 0 & 0& 0&  0& 0& 0& 0\\
                                      0 & 0 &* &* &0 & 0& 0& 0& 0& 0& 0\\
                                      0 & 0 &* &*  & 0 & 0& 0& 0& 0& 0& 0\\
                                                             0 & 0 &0 &0  & * & *& *& 0& 0& 0& 0\\
                                                             0 & 0 &0 &0  & * & *& *& 0& 0& 0& 0\\
                                                             0 & 0 &0 &0  & * & *& *& 0& 0& 0& 0\\
                                                             0 & 0 &0 &0  & 0 & 0& 0& *& *& 0& 0\\
                                                             0 & 0 &0 &0  & 0 & 0& 0& *& *& *& 0\\
                                                             0 & 0 &0 &0  & 0 & 0& 0& *& *& *& *\\
                                                             0 & 0 &0 &0  & 0 & 0& 0& *& *& *& *\\
                                                                            \end{array}
                                  \right]
\quad \longrightarrow\quad \left[
                                   \begin{array}{ccccccccccc}
                                      * & 0      &0            &0  & 0 & 0& 0& 0& 0& 0& 0\\
                                      0 & * &0            &0  & 0 & 0& 0&  0& 0& 0& 0\\
                                      0 & 0 &* &* &0 & 0& 0& 0& 0& 0& 0\\
                                      0 & 0 &* &*  & 0 & 0& 0& 0& 0& 0& 0\\
                                                             0 & 0 &0 &0  & * & *& 0& 0& 0& 0& 0\\
                                                             0 & 0 &0 &0  & * & *& *& 0& 0& 0& 0\\
                                                             0 & 0 &0 &0  & * & *& *& *& 0& 0& 0\\
                                                             0 & 0 &0 &0  & * & *& *& *& 0& 0& 0\\
                                                             0 & 0 &0 &0  & 0 & 0& 0& 0& *& *& *\\
                                                             0 & 0 &0 &0  & 0 & 0& 0& 0& *& *& *\\
                                                             0 & 0 &0 &0  & 0 & 0& 0& 0& *& *& *\\
                                                                            \end{array}
                                  \right]
$}
}
\end{picture}
\end{exmp}

We frequently use the following elementary lemma.
\begin{lem}\label{sum} For any non-negative integers $c$ and $e$,
$$
(x_1^c y_1^e + \cdots +x_n^c y_n^e)\cdot\emph{det}\left[
                                   \begin{array}{cccc}
                                      x_{1}^{\alpha_{1}} y_{1}^{\beta_{1}} & \,x_{1}^{\alpha_{2}} y_{1}^{\beta_{2}} &\cdots  &\, x_{1}^{\alpha_{n}} y_{1}^{\beta_{n}} \\
                                      x_{2}^{\alpha_{1}} y_{2}^{\beta_{1}} & \,x_{2}^{\alpha_{2}} y_{2}^{\beta_{2}} &\cdots &\, x_{2}^{\alpha_{n}} y_{2}^{\beta_{n}} \\
                                      \vdots & \vdots & \ddots &\vdots \\
                                      x_{n}^{\alpha_{1}} y_{n}^{\beta_{1}} & \,x_{n}^{\alpha_{2}} y_{n}^{\beta_{2}} &\cdots &\, x_{n}^{\alpha_{n}} y_{n}^{\beta_{n}} \\
                                                                            \end{array}
                                  \right]
$$

$$
=\emph{det}\left[
                                   \begin{array}{cccc}
                                      x_{1}^{\alpha_{1}+c} y_{1}^{\beta_{1}+e} & \,x_{1}^{\alpha_{2}} y_{1}^{\beta_{2}} &\cdots  &\, x_{1}^{\alpha_{n}} y_{1}^{\beta_{n}} \\
                                      x_{2}^{\alpha_{1}+c} y_{2}^{\beta_{1}+e} & \,x_{2}^{\alpha_{2}} y_{2}^{\beta_{2}} &\cdots &\, x_{2}^{\alpha_{n}} y_{2}^{\beta_{n}} \\
                                      \vdots & \vdots & \ddots &\vdots \\
                                      x_{n}^{\alpha_{1}+c} y_{n}^{\beta_{1}+e} & \,x_{n}^{\alpha_{2}} y_{n}^{\beta_{2}} &\cdots &\, x_{n}^{\alpha_{n}} y_{n}^{\beta_{n}} \\
                                                                            \end{array}
                                  \right]
                                  +\cdots+\emph{det}\left[
                                   \begin{array}{cccc}
                                      x_{1}^{\alpha_{1}} y_{1}^{\beta_{1}} & \,x_{1}^{\alpha_{2}} y_{1}^{\beta_{2}} &\cdots  &\, x_{1}^{\alpha_{n}+c} y_{1}^{\beta_{n}+e} \\
                                      x_{2}^{\alpha_{1}} y_{2}^{\beta_{1}} & \,x_{2}^{\alpha_{2}} y_{2}^{\beta_{2}} &\cdots &\, x_{2}^{\alpha_{n}+c} y_{2}^{\beta_{n}+e} \\
                                      \vdots & \vdots & \ddots &\vdots \\
                                      x_{n}^{\alpha_{1}} y_{n}^{\beta_{1}} & \,x_{n}^{\alpha_{2}} y_{n}^{\beta_{2}} &\cdots &\, x_{n}^{\alpha_{n}+c} y_{n}^{\beta_{n}+e} \\
                                                                            \end{array}
                                  \right].
$$
\end{lem}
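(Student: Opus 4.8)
The plan is to prove the identity by expanding both sides with the Leibniz (permutation-sum) formula for the determinant and matching terms. Write $M$ for the $n\times n$ matrix displayed on the left-hand side, so the $(i,k)$-entry of $M$ is $x_i^{\alpha_k}y_i^{\beta_k}$, and for each $1\le k\le n$ write $M^{(k)}$ for the matrix occurring as the $k$-th summand on the right-hand side: it agrees with $M$ except that the entries of its $k$-th column are $x_i^{\alpha_k+c}y_i^{\beta_k+e}$ instead of $x_i^{\alpha_k}y_i^{\beta_k}$.

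First I would pull the shift out of a single column: by the Leibniz formula,
\[
\det M^{(k)}=\sum_{\sigma\in S_n}\operatorname{sgn}(\sigma)\,x_{\sigma(k)}^{c}y_{\sigma(k)}^{e}\prod_{l=1}^{n}x_{\sigma(l)}^{\alpha_l}y_{\sigma(l)}^{\beta_l},
\]
since the factor contributed by the $k$-th column of $M^{(k)}$ is $x_{\sigma(k)}^{\alpha_k+c}y_{\sigma(k)}^{\beta_k+e}=x_{\sigma(k)}^{c}y_{\sigma(k)}^{e}\cdot x_{\sigma(k)}^{\alpha_k}y_{\sigma(k)}^{\beta_k}$. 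Summing over $k=1,\dots,n$ and interchanging the two finite sums gives
\[
\sum_{k=1}^{n}\det M^{(k)}=\sum_{\sigma\in S_n}\operatorname{sgn}(\sigma)\Bigl(\sum_{k=1}^{n}x_{\sigma(k)}^{c}y_{\sigma(k)}^{e}\Bigr)\prod_{l=1}^{n}x_{\sigma(l)}^{\alpha_l}y_{\sigma(l)}^{\beta_l}.
\]

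The key step is then to observe that for every $\sigma\in S_n$ the inner sum $\sum_{k=1}^{n}x_{\sigma(k)}^{c}y_{\sigma(k)}^{e}$ equals $\sum_{i=1}^{n}x_i^{c}y_i^{e}$, because $\sigma$ merely reindexes the summands; hence this power sum is independent of $\sigma$ and factors out of the sum over $S_n$, yielding $\bigl(\sum_i x_i^{c}y_i^{e}\bigr)\det M$, which is precisely the left-hand side. There is no real obstacle in this argument — it is a direct computation — and the only point that needs a little care is keeping track of the sign and of which column is shifted when passing between the Leibniz expansions of $M$ and of $M^{(k)}$. (One could equivalently argue by multilinearity of the determinant in its columns, distributing $(\sum_i x_i^c y_i^e)\,M$ as a sum according to which row the monomial $x_i^c y_i^e$ is attached to, but the permutation-sum version above is the most transparent.)
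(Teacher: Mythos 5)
Your proof is correct and takes essentially the same route as the paper: the paper deduces the identity as the special case $p=x_1^cy_1^e$, $q=\prod_i x_i^{\alpha_i}y_i^{\beta_i}$ of the auxiliary fact $A(\mathrm{Sym}(p)\,q)=\mathrm{Sym}(p)\,A(q)$, whose one-line proof (a symmetric polynomial is $S_n$-invariant, hence factors out of the signed sum over permutations) is exactly your observation that $\sum_{k}x_{\sigma(k)}^{c}y_{\sigma(k)}^{e}$ is independent of $\sigma$ in the Leibniz expansion.
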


\begin{proof}
The Lemma is the special case of the following Lemma where
$$p=x_1^cy_1^e,\quad
q=x_1^{\alpha_1}y_1^{\beta_1}x_2^{\alpha_2}y_2^{\beta_2}\cdots
x_n^{\alpha_n}y_n^{\beta_n}.$$
\end{proof}

\begin{lem}\label{lem1}
For $p,q\in \mathbb{C}[\mathbf{x},\mathbf{y}]$, we have
$$A(Sym(p)q)=Sym(p)A(q)$$ where $Sym(p)$ denotes the symmetric sum
$\sum_{\sigma\in S_n}\sigma(p)$ and $A(p)$ denotes the alternating
sum $\sum_{\sigma\in S_n}\emph{sign}(\sigma)\sigma(p)$.
\end{lem}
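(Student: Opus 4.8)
The statement to prove is Lemma \ref{lem1}: for $p,q\in\C[\mathbf{x},\mathbf{y}]$, one has $A(\mathrm{Sym}(p)q)=\mathrm{Sym}(p)A(q)$, where $\mathrm{Sym}(p)=\sum_{\sigma}\sigma(p)$ and $A(p)=\sum_\sigma\mathrm{sign}(\sigma)\sigma(p)$.

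The plan is to expand both sides directly using the group action and relabel the summation index. First I would note that $\mathrm{Sym}(p)$ is a symmetric polynomial, hence invariant under every $\tau\in S_n$: $\tau(\mathrm{Sym}(p))=\mathrm{Sym}(p)$. Then I would compute
\[
A(\mathrm{Sym}(p)\,q)=\sum_{\tau\in S_n}\mathrm{sign}(\tau)\,\tau\bigl(\mathrm{Sym}(p)\,q\bigr)
=\sum_{\tau\in S_n}\mathrm{sign}(\tau)\,\tau(\mathrm{Sym}(p))\,\tau(q)
=\sum_{\tau\in S_n}\mathrm{sign}(\tau)\,\mathrm{Sym}(p)\,\tau(q),
\]
using that the $S_n$-action is by ring automorphisms (so it distributes over products) together with the invariance of $\mathrm{Sym}(p)$. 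Pulling the common factor $\mathrm{Sym}(p)$ out of the sum gives $\mathrm{Sym}(p)\sum_\tau\mathrm{sign}(\tau)\tau(q)=\mathrm{Sym}(p)A(q)$, which is exactly the right-hand side.

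Key steps, in order: (1) recall/verify that the $S_n$-action on $\C[\mathbf{x},\mathbf{y}]$ is by algebra automorphisms, so $\tau(fg)=\tau(f)\tau(g)$; (2) observe $\mathrm{Sym}(p)$ is symmetric, so $\tau(\mathrm{Sym}(p))=\mathrm{Sym}(p)$ for all $\tau$ (this is immediate since reindexing $\sigma\mapsto\tau\sigma$ permutes the summands of $\mathrm{Sym}(p)$); (3) substitute into the definition of $A(\mathrm{Sym}(p)q)$, factor out $\mathrm{Sym}(p)$, and recognize the remaining sum as $A(q)$.

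There is essentially no obstacle here — the lemma is a formal identity about the symmetric group action, and the only thing to be careful about is bookkeeping the index substitution that establishes the symmetry of $\mathrm{Sym}(p)$. I would keep the proof to a few lines.

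\begin{proof}
The symmetric group $S_n$ acts on $\C[\mathbf{x},\mathbf{y}]$ by $\C$-algebra automorphisms, so $\tau(fg)=\tau(f)\tau(g)$ for all $\tau\in S_n$ and $f,g\in\C[\mathbf{x},\mathbf{y}]$. The polynomial $\mathrm{Sym}(p)=\sum_{\sigma\in S_n}\sigma(p)$ is symmetric: for any $\tau\in S_n$,
\[
\tau(\mathrm{Sym}(p))=\sum_{\sigma\in S_n}\tau\sigma(p)=\sum_{\sigma'\in S_n}\sigma'(p)=\mathrm{Sym}(p),
\]
where we reindexed by $\sigma'=\tau\sigma$. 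Therefore
\[
A(\mathrm{Sym}(p)\,q)=\sum_{\tau\in S_n}\mathrm{sign}(\tau)\,\tau\bigl(\mathrm{Sym}(p)\,q\bigr)
=\sum_{\tau\in S_n}\mathrm{sign}(\tau)\,\tau(\mathrm{Sym}(p))\,\tau(q)
=\mathrm{Sym}(p)\sum_{\tau\in S_n}\mathrm{sign}(\tau)\,\tau(q)=\mathrm{Sym}(p)\,A(q),
\]
as claimed.
\end{proof}
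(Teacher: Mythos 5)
Your proof is correct and follows essentially the same route as the paper's: both use that $Sym(p)$ is $S_n$-invariant, that the action distributes over products, and then factor $Sym(p)$ out of the alternating sum. Your version merely spells out the reindexing $\sigma'=\tau\sigma$ that the paper leaves implicit.
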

\begin{proof}
 Since the polynomial $Sym(p)$ is invariant under $S_n$ action, $$A(Sym(p)q)=\sum_\sigma \mbox{sign}(\sigma) \sigma(Sym(p)q)=\sum_\sigma\mbox{sign}(\sigma) Sym(p)\sigma(q)=Sym(p)A(q).$$
\end{proof}
 \begin{remark}
 Lemma~\ref{sum} implies that the determinants on the right hand side of the equality are linearly dependent modulo $I_{<d}$ where $d=\sum_{i=1}^n(\alpha_i+\beta_i)+c+e$. It is easier to express the dependency in terms of squares in $\N\times \N$: for $D=\{(\alpha_1,\beta_1),\dots,(\alpha_n,\beta_n)\}\in\mathfrak{D}$, let $D_i\in\mathfrak{D}$ be obtained from $D$ by replacing $(\alpha_i,\beta_i)$ by $(\alpha_i+c,\beta_i+e)$. Then Lemma~\ref{sum} asserts that
$$\Delta(D_1)+\Delta(D_2)+\cdots+\Delta(D_n)\sim 0 \quad \hbox{ modulo } I_{<d}.$$
Up to modulo $I_{<d}$, we can replace $\Delta(D_i)$ by a linear combination of $\Delta(D_j)$ for $j\neq i$. To say it more vividly, $D_j$ is obtained from $D_i$ by sending $(\alpha_i+c,\beta_i+e)$ to $(\alpha_i,\beta_i)$, and then sending $(\alpha_j,\beta_j)$ to $(\alpha_j+c,\beta_j+e)$. \qed
\end{remark}

\begin{lem}\label{lem:powerful}
 Let $D=\{P_1,\dots,P_n\}\in\mathfrak{D}$ where $P_i=(\alpha_i,\beta_i)$ are not necessarily distinct and $\{s_i:=\alpha_i+\beta_i\}_{1\le i\le n}$ are weakly increasing. Let $S$ be a staircase form of $D$ and $B(S)$ its block diagonal form.  Suppose the last block of $B(S)$ is of size $t_0$ and in this block there are $j_r$ nonzero entries above the diagonal. Suppose the first $(j_r+2)$ blocks of $B(S)$ are of size $1$, i.e., $s_i=i-1$ for $1\le i\le j_r+3$. Suppose $P_2=(1,0)$. Let $t$ be an integer that $1\le t\le t_0$. Suppose $\alpha_{n-t+1},\beta_{n-t+1}\ge 1$. Let
$$\aligned &D^\nwarrow=\{P_1,\dots,P_{j_r+1},P_{j_r+2}+(1,-1),P_{j_r+3},\dots,P_{n-t},P_{n-t+1}+(-1,1),P_{n-t+2},\dots,P_n\},\\
&D^\searrow=\{P_1,(0,1),P_3,\dots,P_{n-t},P_{n-t+1}+(1,-1),P_{n-t+2},\dots,P_n\}.\endaligned$$
Then $2\Delta(D)\sim\Delta(D^\nwarrow)+\Delta(D^\searrow)$ modulo
$I_{<d}$ and staircase forms of lower partition types. Moreover, if
the last block of $B(S)$ is not minimal or if $s_{n-t+1}>n-t_0$,
then $\Delta(D)\sim\Delta(D^\searrow)$ modulo $I_{<d}$ and staircase
forms of lower partition types.
\end{lem}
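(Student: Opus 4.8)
The plan is to combine the Transfactor Lemma, the Minors Permuting Lemma, and Lemma~\ref{sum} (in the "relocation" form of the Remark) in a sequence of local moves on $D$, always tracking the block structure of the associated staircase form. Write $j := j_r$. The hypotheses give us a very restricted situation: the first $j+3$ values $s_1,\dots,s_{j+3}$ equal $0,1,\dots,j+2$, so the first $j+2$ blocks of $B(S)$ are singletons, and moreover $P_2=(1,0)$ so the second singleton block carries the factor $x_{21}$. The point $P_{j_r+2}$ sits at the boundary of the staircase line ($s_{j_r+2}=j_r+1$) while the last block, of size $t_0$ with $j_r$ boxes above the diagonal, is where the "deficit" is concentrated. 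The key identity to exploit is that the $2\times 2$ determinant relation hidden in a staircase block of size $1$ can be "traded" with boxes elsewhere.

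First I would apply Lemma~\ref{sum} to the box $P_2=(1,0)$: taking $(c,e)=(-1,1)$ (formally, since $\alpha_2=1$ this is legitimate: we think of moving the monomial from $(1,0)$ to $(0,1)$) gives a relation $\sum_{i} \Delta(D_i)\sim 0$, where $D_i$ is obtained from $D$ by replacing $P_i$ with $P_i+(-1,1)$ after first resetting $P_2=(1,0)\mapsto(0,1)$. By Corollary~\ref{trivialdet}, almost every term $\Delta(D_i)$ vanishes modulo $I_{<d}$ because moving a point up-left pushes some $s_v$ above $v-1$; the surviving terms are exactly $i=2$ itself (giving $\Delta(D^\searrow)$-type contributions where instead the point $P_{n-t+1}$ absorbs the $(1,-1)$) and $i$ in the last block. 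This is the mechanism that produces the two terms $\Delta(D^\nwarrow)$ and $\Delta(D^\searrow)$ together with the factor $2$: two of the surviving terms turn out to be equivalent after applying the Transfactor Lemma to slide the perturbed singleton from position $2$ to position $j_r+2$, and to slide $P_{n-t+1}$'s perturbation within the last block using Minors Permuting Lemma, both of which are valid precisely because the intervening $s$-values lie on the staircase line and the relevant $\alpha$'s satisfy the size bound. The remaining surviving terms are staircase forms whose block structure has strictly smaller parts, i.e.\ of lower partition type, which is why they may be discarded in the claimed congruence.

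For the "Moreover" clause: if the last block of $B(S)$ is not minimal, then there is a nonzero entry strictly above the superdiagonal in that block, which means one of the $\alpha_v$ or $\beta_v$ (for $v$ in the last block) is large enough that Lemma~\ref{sum} applied directly to $P_{n-t+1}$ with $(c,e)=(1,-1)$ lets us relocate that box and obtain $\Delta(D)\sim \Delta(D^\searrow)$ with all other terms of lower type — the factor $2$ collapses because the $\Delta(D^\nwarrow)$ term is itself now of lower partition type (the non-minimality means moving $P_{j_r+2}$ to southeast reconnects it to the staircase and shrinks the last block). Similarly, the condition $s_{n-t+1}>n-t_0$ means $P_{n-t+1}$ is already in the interior of the last block rather than at its top, so the $\Delta(D^\nwarrow)$ move degrades the partition type for the same reason. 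In both cases one verifies that exactly $\Delta(D^\searrow)$ survives.

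The main obstacle I anticipate is bookkeeping the block structure after each move: I must confirm that after the Transfactor/Permuting slides, the resulting $D'$ genuinely has a staircase form whose block diagonal form is what I claim (same partition type for the two main survivors, strictly lower for the rest), and in particular that no box inadvertently violates $s_v\le v-1$ or collides with another box in a way that changes $|D'|<n$. Getting the sign right in the $2\times 2$ vs.\ $3\times 3$ determinant cancellations, and confirming the constraint $\alpha_{n-t+1},\beta_{n-t+1}\ge 1$ is exactly what is needed to run Lemma~\ref{sum} in both the $(1,-1)$ and $(-1,1)$ directions, is the delicate part; everything else is an assembly of the three already-proved relations.
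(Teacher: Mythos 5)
Your central step is not valid as stated: Lemma~\ref{sum} only allows \emph{non-negative} exponents $c,e$, because its proof multiplies by the honest symmetric polynomial $\sum_i x_i^cy_i^e$; there is no ``formal'' application with $(c,e)=(-1,1)$, and the Remark's relocation principle likewise only moves a box by adding a fixed non-negative bidegree to one point of a lower-degree base configuration. Even if you repair this by moving a single unit box out of $P_2$, such a unit move can never produce $\Delta(D^\searrow)$ or $\Delta(D^\nwarrow)$, whose definitions require transferring essentially the whole weight of $P_{n-t+1}$ between position $n-t+1$ and position $2$ (resp.\ $j_r+2$). The paper's mechanism is different: delete $P_{n-t+1}$, insert $(0,1)$, and multiply by $\sum_i x_i^{\alpha_{n-t+1}}y_i^{\beta_{n-t+1}-1}$ (this product lies in $(\mathbf{x},\mathbf{y})I$ precisely because $\alpha_{n-t+1},\beta_{n-t+1}\ge1$), then expand by Lemma~\ref{sum}; and then do this a \emph{second} time with $\sum_i x_i^{\alpha_{n-t+1}-1}y_i^{\beta_{n-t+1}}$ against the base in which $P_{j_r+2}$ has already been pushed to $P_{j_r+2}+(1,-1)$. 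Each expansion leaves three surviving terms, one of which is the awkward middle term containing the combined point $P_{j_r+2}+P_{n-t+1}-(0,1)$; this term is \emph{not} of lower partition type in general and cannot be discarded. The factor $2$ comes only from playing the two relations against each other: the Transfactor Lemma~\ref{lem:transfactor} identifies one survivor of the second relation with $-\Delta(D)$ (up to sign), and the two middle terms coincide and cancel. Your proposal has no second relation and no cancellation mechanism for the middle term, so the claimed congruence $2\Delta(D)\sim\Delta(D^\nwarrow)+\Delta(D^\searrow)$ is never actually derived; the phrase ``two of the surviving terms turn out to be equivalent, producing the factor $2$'' is where the proof is missing.

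Your treatment of the ``Moreover'' clause also goes wrong quantitatively. You argue that under non-minimality (or $s_{n-t+1}>n-t_0$) the term $\Delta(D^\nwarrow)$ drops to a lower partition type; but even if that were so, subtracting it from the main relation would give $2\Delta(D)\sim\Delta(D^\searrow)$, not the asserted $\Delta(D)\sim\Delta(D^\searrow)$. In the paper the stronger conclusion comes from a different source: under either extra hypothesis the \emph{middle} term vanishes outright, because the combined point $P_{j_r+2}+P_{n-t+1}-(0,1)$ then has degree at least $j_r+n-t_0>n-1$, so its determinant is zero by Corollary~\ref{trivialdet}; the first expansion alone then reads $\Delta(D)+(\pm)\Delta(D^\searrow)\sim0$ modulo $I_{<d}$ and lower types, which is exactly the claim. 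So both halves of the lemma hinge on the two-relation bookkeeping with the full-weight monomial $x^{\alpha_{n-t+1}}y^{\beta_{n-t+1}-1}$, which your outline does not contain.
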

\begin{proof}
The reader is strongly recommended to see Example~\ref{criticalexample} first.

Suppose the partition type of $D$ is $$\underbrace{j_1+\cdots+j_1}_{m_1}+\cdots+\underbrace{j_{r-1}+\cdots+j_{r-1}}_{m_{r-1}}+
\underbrace{j_r+\cdots+j_r}_{m_r}.$$
Applying Lemma \ref{sum} to
$$(\sum x_i^{\alpha_{n-t+1}}y_i^{\beta_{n-t+1}-1})\cdot \Delta(\{P_1,(0,1),P_2,\dots,\widehat{P}_{n-t+1},\dots,P_n\}),$$
 which is an element in $I_{<d}$, we get a sum of $n$ determinants:
 the 1st determinant is in $I_{<d}$ because the first row of its staircase form is the zero row.
  The 2nd determinant is
\begin{equation}\label{eq:1}\Delta(\{P_1,P_{n-t+1},P_2,\dots,\widehat{P}_{n-t+1},\dots,P_n\})=(-1)^{n-t-1}\Delta(D).\end{equation}
The $i$-th determinant for $i\ge 3$ is
$$\Delta(\{P_1,(0,1),P_2,\dots,P_{i-2}, P_{i-1}+P_{n-t+1}-(0,1), P_i,\dots,\widehat{P}_{n-t+1},\dots,P_n\}),$$
when $3\le i\le j_r+3$, its partition type is equal to or lower than
$$\underbrace{j_1+\cdots+j_1}_{m_1}+\cdots+\underbrace{j_{r-1}+\cdots+j_{r-1}}_{m_{r-1}}+
    \underbrace{j_r+\cdots+j_r}_{m_r-1}+(i-3)+(j_r-i+3)$$
which is strictly lower than the partition type of $D$ when $4\le
i\le j_r+2$;  when $i>j_r+3$, the determinant is equivalent to 0. So
modulo $I_{<d}$ and staircase forms of lower partition types, the
sum of
\begin{equation}\label{eq:2}\Delta(\{P_1,(0,1), P_2+P_{n-t+1}-(0,1), P_3,\dots,\widehat{P}_{n-t+1},\dots,P_n\}),
\end{equation}
\begin{equation}\label{eq:3}\Delta(\{P_1,(0,1), P_2,\dots,P_{j_r+1}, P_{j_r+2}+P_{n-t+1}-(0,1), P_{j_r+3},\dots,\widehat{P}_{n-t+1},\dots,P_n\})
\end{equation}
and (\ref{eq:1}) is equivalent to 0.

Similarly as above, applying Lemma \ref{sum} to $$(\sum
x_i^{\alpha_{n-t+1}-1}y_i^{\beta_{n-t+1}})\cdot
\Delta(\{P_1,(0,1),P_2,\dots,P_{j_r+1},P_{j_r+2}+(1,-1),P_{j_r+3},\dots,\widehat{P}_{n-t+1},\dots,P_n\}),$$
which is an element in $I_{<d}$, we get a sum of $n$ determinants:
the 2nd determinant is
\begin{equation}\label{eq:4}\Delta(\{P_1,P_{n-t+1}+(-1,1),P_2,\dots,P_{j_r+1},P_{j_r+2}+(1,-1),P_{j_r+3},\dots,\widehat{P}_{n-t+1},\dots,P_n\}),\end{equation}
the 3rd determinant is
\begin{equation}\label{eq:5}\Delta(\{P_1,(0,1),P_{n-t+1},P_3,\dots,P_{j_r+1},P_{j_r+2}+(1,-1),P_{j_r+3},\dots,\widehat{P}_{n-t+1},\dots,P_n\}),
\end{equation}
the ($j_r+3$)-th determinant is
\begin{equation}\label{eq:6}\Delta(\{P_1,(0,1),P_2,\dots,P_{j_r+1},P_{j_r+2}+P_{n-t+1}-(0,1),P_{j_r+3},\dots,\widehat{P}_{n-t+1},\dots,P_n\}),
\end{equation}
and all other determinants are equivalent to 0 modulo $I_{<d}$ and
staircase forms of partition types lower than $D$. Now compare the
two relations we obtained:
$$\left\{
\begin{array}{l} (\ref{eq:1})+(\ref{eq:2})+(\ref{eq:3})\sim 0,\\
          (\ref{eq:4})+(\ref{eq:5})+(\ref{eq:6})\sim 0.
\end{array}
\right.$$
Note that by Transfactor Lemma (Lemma \ref{lem:transfactor}), the polynomial (\ref{eq:5}) is equivalent to
$$\aligned&\Delta(\{P_1,P_2,P_{n-t+1},P_3,\dots,P_{j_r+1},P_{j_r+2},P_{j_r+3},\dots,\widehat{P}_{n-t+1},\dots,P_n\})\\
&=(-1)^{n-t-2}\Delta(D)=-(\ref{eq:1}),
\endaligned$$
and also note that (\ref{eq:3})=(\ref{eq:6}). So we have
$$(\ref{eq:4}) \sim -(\ref{eq:5})-(\ref{eq:6}) \sim  (\ref{eq:1})-(\ref{eq:3}) \sim  2(\ref{eq:1})+(\ref{eq:2}).$$ Since (\ref{eq:4})=$(-1)^{n-t-1}\Delta(D^\nwarrow)$ and (\ref{eq:2})$=(-1)^{n-t-2}\Delta(D^\searrow)$, the lemma follows.

Note that since $\deg P_{n-t+1}\ge \deg P_{n-t_0+1}=n-t_0$, we have  $$\deg \big{(}P_{j_r+2}+P_{n-t+1}-(0,1)\big{)}\ge (j_r+1)+(n-t_0)-1=j_r+n-t_0$$ which is greater than $n-1$ if $j_r\ge t_0$. But this is always the case if the last block of $B(S)$ is not minimal. In this case,  (\ref{eq:3})$\sim 0$ and therefore $(\ref{eq:1})+(\ref{eq:2})\sim 0$. Of course we still have $(\ref{eq:1})+(\ref{eq:2})\sim 0$ if $(s_{n-t+1}=)\deg P_{n-t+1}>n-t_0$.
\end{proof}

The discovery of Lemma \ref{lem:powerful} is motivated by the observation in the following example.
\begin{exmp}\label{criticalexample}
Let $n=9$, $k=3$, then $d={9 \choose 2}-3=33$. Consider a partition $(2+1)\in\Pi_3$, and let $t=3$. Let $D=$
\begin{picture}(85,50)
\put(0,0){\line(0,1){40}} \put(0,0){\line(1,0){80}}
\put(10,0){\line(0,1){40}} \put(0,10){\line(1,0){80}}
\put(20,0){\line(0,1){40}} \put(0,20){\line(1,0){80}}
\put(30,0){\line(0,1){40}} \put(0,30){\line(1,0){80}}
\put(40,0){\line(0,1){40}} \put(0,40){\line(1,0){80}}
\put(50,0){\line(0,1){40}}
\put(60,0){\line(0,1){40}}
\put(70,0){\line(0,1){40}}
\put(80,0){\line(0,1){40}}
\put(-3,-3){$\bullet$}\put(7,-3){$\bullet$}
\put(-3,17){$\bullet$}\put(-3,27){$\bullet$}
\put(17,17){$\bullet$}\put(27,7){$\bullet$}
\put(47,7){$\bullet$}\put(57,-3){$\bullet$}
\put(67,-3){$\bullet$}
\end{picture} (in the standard order) and $f=\Delta(D)$.

\noindent (i) Applying Lemma \ref{sum} to the product of  $\sum_{i=1}^9 x_i^5 y_i^0$ with $\Delta$(
\begin{picture}(80,50)
\put(0,0){\line(0,1){40}} \put(0,0){\line(1,0){80}}
\put(10,0){\line(0,1){40}} \put(0,10){\line(1,0){80}}
\put(20,0){\line(0,1){40}} \put(0,20){\line(1,0){80}}
\put(30,0){\line(0,1){40}} \put(0,30){\line(1,0){80}}
\put(40,0){\line(0,1){40}} \put(0,40){\line(1,0){80}}
\put(50,0){\line(0,1){40}}
\put(60,0){\line(0,1){40}}
\put(70,0){\line(0,1){40}}
\put(80,0){\line(0,1){40}}
\put(-3,-3){$\bullet$}\put(7,-3){$\bullet$}
\put(-3,17){$\bullet$}\put(-3,27){$\bullet$}
\put(17,17){$\bullet$}\put(27,7){$\bullet$}
\put(-3,7){$\bullet$}\put(57,-3){$\bullet$}
\put(67,-3){$\bullet$}
\end{picture}
). Modulo $I_{<d}+($minimal staircase forms of lower partitions), there are 2 summands
remained in the sum: $\Delta$(
\begin{picture}(84,50)
\put(0,0){\line(0,1){40}} \put(0,0){\line(1,0){80}}
\put(10,0){\line(0,1){40}} \put(0,10){\line(1,0){80}}
\put(20,0){\line(0,1){40}} \put(0,20){\line(1,0){80}}
\put(30,0){\line(0,1){40}} \put(0,30){\line(1,0){80}}
\put(40,0){\line(0,1){40}} \put(0,40){\line(1,0){80}}
\put(50,0){\line(0,1){40}}
\put(60,0){\line(0,1){40}}
\put(70,0){\line(0,1){40}}
\put(80,0){\line(0,1){40}}
\put(-3,-3){$\bullet$}\put(7,-3){$\bullet$}
\put(-3,17){$\bullet$}\put(-3,27){$\bullet$}
\put(17,17){$\bullet$}\put(27,7){$\bullet$}
\put(47,7){$\bullet$}\put(57,-3){$\bullet$}
\put(67,-3){$\bullet$}\put(-3,7){$\circ$}
\put(35,8.2){$\rightarrow$}
\put(37,11){\line(-1,0){35}}
\end{picture})
and $\Delta$(
\begin{picture}(84,50)
\put(0,0){\line(0,1){40}} \put(0,0){\line(1,0){80}}
\put(10,0){\line(0,1){40}} \put(0,10){\line(1,0){80}}
\put(20,0){\line(0,1){40}} \put(0,20){\line(1,0){80}}
\put(30,0){\line(0,1){40}} \put(0,30){\line(1,0){80}}
\put(40,0){\line(0,1){40}} \put(0,40){\line(1,0){80}}
\put(50,0){\line(0,1){40}}
\put(60,0){\line(0,1){40}}
\put(70,0){\line(0,1){40}}
\put(80,0){\line(0,1){40}}
\put(-3,-3){$\bullet$}\put(7,-3){$\bullet$}
\put(-3,17){$\bullet$}\put(47,27){$\bullet$}
\put(17,17){$\bullet$}\put(27,7){$\bullet$}
\put(-3,7){$\bullet$}\put(57,-3){$\bullet$}
\put(67,-3){$\bullet$}\put(-3,27){$\circ$}
\put(35,28.2){$\rightarrow$}
\put(37,31){\line(-1,0){35}}
\end{picture}).  Since the former is $\pm f$, the latter is
equivalent to $\pm f$ modulo $I_{<d}+($minimal staircase forms of lower partitions).

\noindent (ii) On the other hand, by Transfactor Lemma, $\Delta$(
\begin{picture}(84,50)
\put(0,0){\line(0,1){40}} \put(0,0){\line(1,0){80}}
\put(10,0){\line(0,1){40}} \put(0,10){\line(1,0){80}}
\put(20,0){\line(0,1){40}} \put(0,20){\line(1,0){80}}
\put(30,0){\line(0,1){40}} \put(0,30){\line(1,0){80}}
\put(40,0){\line(0,1){40}} \put(0,40){\line(1,0){80}}
\put(50,0){\line(0,1){40}}
\put(60,0){\line(0,1){40}}
\put(70,0){\line(0,1){40}}
\put(80,0){\line(0,1){40}}
\put(-3,-3){$\bullet$}\put(-3,7){$\bullet$}
\put(7,17){$\bullet$}\put(-3,27){$\circ$}
\put(17,17){$\bullet$}\put(27,7){$\bullet$}
\put(47,7){$\bullet$}\put(57,-3){$\bullet$}
\put(67,-3){$\bullet$}
\put(7,-3){$\circ$}\put(-3,17){$\bullet$}
\end{picture}) is in $I_{<d}+($minimal staircase forms of lower
partitions)+$(f)$. Note that we move the points $(1,0)$ and $(0,3)$
in $D$.

\noindent (iii) Applying Lemma \ref{sum} to the product of  $\sum_{i=1}^9 x_i^4 y_i^1$ with $\Delta$(
\begin{picture}(82,50)
\put(0,0){\line(0,1){40}} \put(0,0){\line(1,0){80}}
\put(10,0){\line(0,1){40}} \put(0,10){\line(1,0){80}}
\put(20,0){\line(0,1){40}} \put(0,20){\line(1,0){80}}
\put(30,0){\line(0,1){40}} \put(0,30){\line(1,0){80}}
\put(40,0){\line(0,1){40}} \put(0,40){\line(1,0){80}}
\put(50,0){\line(0,1){40}}
\put(60,0){\line(0,1){40}}
\put(70,0){\line(0,1){40}}
\put(80,0){\line(0,1){40}}
\put(-3,-3){$\bullet$}\put(-3,7){$\bullet$}
\put(7,17){$\bullet$}\put(-3,17){$\bullet$}
\put(17,17){$\bullet$}\put(27,7){$\bullet$}
\put(57,-3){$\bullet$}
\put(67,-3){$\bullet$}\put(7,-3){$\bullet$}
\end{picture}).
Modulo $I_{<d}+($minimal staircase forms of lower partitions), we have 3 summands left:
$\Delta$(
\begin{picture}(84,50)
\put(0,0){\line(0,1){40}} \put(0,0){\line(1,0){80}}
\put(10,0){\line(0,1){40}} \put(0,10){\line(1,0){80}}
\put(20,0){\line(0,1){40}} \put(0,20){\line(1,0){80}}
\put(30,0){\line(0,1){40}} \put(0,30){\line(1,0){80}}
\put(40,0){\line(0,1){40}} \put(0,40){\line(1,0){80}}
\put(50,0){\line(0,1){40}}
\put(60,0){\line(0,1){40}}
\put(70,0){\line(0,1){40}}
\put(80,0){\line(0,1){40}}
\put(-3,-3){$\bullet$}\put(-3,7){$\bullet$}
\put(7,17){$\bullet$}\put(-3,17){$\bullet$}
\put(17,17){$\bullet$}\put(27,7){$\bullet$}
\put(47,7){$\bullet$}\put(57,-3){$\bullet$}
\put(67,-3){$\bullet$}\put(7,-3){$\circ$}\put(48,10){\line(-4,-1){36}}\put(42,8){\tiny{$>$}}
\end{picture}),
$\Delta$(
\begin{picture}(84,50)
\put(0,0){\line(0,1){40}} \put(0,0){\line(1,0){80}}
\put(10,0){\line(0,1){40}} \put(0,10){\line(1,0){80}}
\put(20,0){\line(0,1){40}} \put(0,20){\line(1,0){80}}
\put(30,0){\line(0,1){40}} \put(0,30){\line(1,0){80}}
\put(40,0){\line(0,1){40}} \put(0,40){\line(1,0){80}}
\put(50,0){\line(0,1){40}}
\put(60,0){\line(0,1){40}}
\put(70,0){\line(0,1){40}}
\put(80,0){\line(0,1){40}}
\put(-3,-3){$\bullet$}\put(-3,7){$\bullet$}
\put(-3,17){$\bullet$}\put(7,17){$\circ$}
\put(17,17){$\bullet$}\put(27,7){$\bullet$}
\put(47,27){$\bullet$}\put(57,-3){$\bullet$}
\put(67,-3){$\bullet$}\put(7,-3){$\bullet$}\put(48,30){\line(-4,-1){36}}\put(42,28){\tiny{$>$}}
\end{picture}),
and $\Delta$(
\begin{picture}(84,50)
\put(0,0){\line(0,1){40}} \put(0,0){\line(1,0){80}}
\put(10,0){\line(0,1){40}} \put(0,10){\line(1,0){80}}
\put(20,0){\line(0,1){40}} \put(0,20){\line(1,0){80}}
\put(30,0){\line(0,1){40}} \put(0,30){\line(1,0){80}}
\put(40,0){\line(0,1){40}} \put(0,40){\line(1,0){80}}
\put(50,0){\line(0,1){40}}
\put(60,0){\line(0,1){40}}
\put(70,0){\line(0,1){40}}
\put(80,0){\line(0,1){40}}
\put(-3,-3){$\bullet$}\put(-3,7){$\circ$}
\put(-3,17){$\bullet$}\put(7,17){$\bullet$}
\put(17,17){$\bullet$}\put(27,7){$\bullet$}
\put(37,17){$\bullet$}\put(57,-3){$\bullet$}
\put(67,-3){$\bullet$}\put(7,-3){$\bullet$}\put(38,20){\line(-4,-1){36}}\put(32,18){\tiny{$>$}}
\end{picture}). We already know that the first two are
in the ideal $I_{<d}+($minimal staircase forms of lower partitions)+$(f)$,
hence the last one as well. \qed
\end{exmp}

\begin{proof}[Proof of Proposition~\ref{staircaseformofpartition}] First
we explain the condition $n\ge 8k+5$. It follows from the conditions
$d_1+d_2\le n(n-1)/2$ and $d_1,d_2\ge (2k+1)n$, which imply
$n(n-1)/2\ge 2(2k+1)n$, equivalently $n\ge 8k+5$.

We prove by induction on $k$. The base case $k=0$ is proved in Lemma \ref{lem:base case}. Suppose the proposition is proved for $<k$.

Let $D=\{P_1,\dots,P_n\}\in\mathfrak{D}$, $S$ be a minimal staircase
form of $D$ of partition type $\mu$. Notice that, without loss of
generality, we can assume that the last block of $B(S)$ is of size
greater than $1$. Indeed, suppose the last block, which corresponds
to $P_n$, is of size 1, and suppose that the block $M$ is the last
block among those of size greater than 1. Since $d_1\ge (2k+1)n$,
there are sufficient size-1 blocks in $B(S)$, such that by
successively moving a $P_i$ corresponding to a size-1 block to
northwest direction and moving $P_n$ to southeast direction using
Transfactor Lemma, we can assume $P_n=(\alpha_n,0)$. (Of course
$d_1\ge (2k+1)n$ is not a sharp bound. We obtain this bound by
noticing that there are at most $2k$ points of $D$ that do not
correspond to size-1 blocks, the $x$-degree of each of which is less
than $n$, while the last point $P_n$ also has $x$-degree less than
$n$. So as long as the total $x$-degree is larger than $2k\cdot
n+n$, the point $P_n$ can be moved to southeast direction by
Transfactor lemma.) Then we can apply Minors Permuting Lemma to
permute the last block with the blocks before it until it moves in
front of $M$. Then $M$ is moved to the lower right in a block
diagonal form. This procedure can be repeated until $M$ becomes the
last block.

By Transfactor Lemma and Minors Permuting Lemma together with the condition that $n\ge 8k+5$, we can assume the first $(k+2)$ blocks of $B(S)$ are all of size 1.

Now we are in the position to apply Lemma \ref{lem:powerful}.
Denote by $t_0$ the size of the last block in $B(S)$. By Transfactor Lemma we may
assume $P_2=(1,0)$. If for $1\le t\le t_0$ the point $P_{n-t+1}$ has degree $s_{n-t+1}>n-t_0$,
 then $D\sim D^\searrow$, which means that we can move
$P_{n-t+1}$ to $P_{n-t+1}+(1,-1)$. Successively applying this
procedure, we may assume that all points $P_i$ for $i>n-t+2$ have
y-coordinates 0.

Define $a(D)=\alpha_{n-t_0+2}-\alpha_{n-t_0+1}$. Then
$$a(D^\nwarrow)-1=a(D)=a(D^\searrow)+1.$$ Consider the special case
when $P_{n-t_0+1}=P_{n-t_0+2}$. In this case $\Delta(D)=0$ hence
$\Delta(D^\nwarrow)\sim-\Delta(D^\searrow)$, $a(D^\nwarrow)=1$ and
$a(D^\searrow)=-1$. Let $D''$ be the set obtained by interchanging
the $(n-t_0+1)$-th and $(n-t_0+2)$-th points in $D^\searrow$. Now we
compare $D^{\nwarrow}=\{P'_1,\dots,P'_n\}$ with
$D''=\{P''_1,\dots,P''_n\}$:

\begin{itemize}
\item They both give minimal staircase forms with the same partition type as $S$,

\item $a(D^\nwarrow)=a(D'')=1$,

\item $\Delta(D^\nwarrow)\sim \Delta(D'')$,

\item $P''_i=
\left\{
\begin{array}{ll}P'_i+(1,-1), &\hbox{ for } i=n-t+1, n-t+2;\\
         P'_i+(-1,1), &\hbox{ for } i=2,j_r+2;\\
         P'_i, &\hbox{ otherwise}.
\end{array}
\right.$

\end{itemize}
In other words, we can move $P'_{n-t+1}$ and $P'_{n-t+2}$ of
$D^\nwarrow$ to southeast direction and move two size-1 blocks of
$D^\nwarrow$ to northwest direction simultaneously without changing
$\Delta(D^\nwarrow)$ modulo the equivalence relation. Repeat the
procedure until the y-coordinates of the $(n-t+1)$-th and
$(n-t+2)$-th points are 1 and 0, respectively. Then apply the
inductive assumption for the first $n-t$ points, we can draw the
following conclusion:

For any $D'$ and $D''$ such that
\begin{itemize}
\item[(i)] both have minimal staircase forms,
\item[(ii)] their staircase forms are of the same partition type,
\item[(iii)] $\Delta(D)$ and $\Delta(D')$ have the same bi-degree,
\item[(iv)] $a(D')=a(D'')=1$,
\end{itemize}
then $\Delta(D')\sim \pm\Delta(D'')$. If (ii) is replaced by a stronger condition:
\begin{itemize}
\item[(ii)$'$] they are both in standard order and their block diagonal forms are of the same shape (i.e. for any $i$, the size of the $i$-th blocks in both block diagonal forms are the same),
\end{itemize}

\noindent then $\Delta(D')\sim \Delta(D'')$.

By Lemma \ref{lem:powerful}, we can also show that, under condition (i) (ii)$'$ (iii) and assume that $a(D'), a(D'')>0$, $$\Delta(D')/a(D')\sim\Delta(D'')/a(D'').$$
Indeed, it is sufficient to show that
\begin{equation}\label{a(D)} \hbox{ if conditions (i)(ii)$'$ (iii)hold and $a(D')=1$, then } a(D'')\Delta(D')\sim\Delta(D''). \end{equation}
This can be proved by induction on $a(D'')$. The case $a(D'')=0$ is trivial since in this case $\Delta(D'')=0$. We have already shown the case $a(D'')=1$. Now by inductive assumption we assume that (\ref{a(D)}) is true for $a(D'')=k-1$ and $k$. Suppose $a(D'')=k+1$. Take $D\in\mathfrak{D}$ such that $D^\nwarrow\sim D''$. (This is always possible by using Transfactor Lemma and Minors Permuting Lemma to modify $D''$.) Then Lemma \ref{lem:powerful} asserts that
$2\Delta(D)\sim\Delta(D^\nwarrow)+\Delta(D^\searrow)$.
By inductive assumption $\Delta(D)\sim k\,\Delta(D')$ and $\Delta(D^\searrow)\sim (k-1)\Delta(D')$, therefore
$$\Delta(D'')\sim \Delta(D^\nwarrow)\sim 2k\,\Delta(D')-(k-1)\Delta(D')=(k+1)\Delta(D'),$$ this completes the inductive proof of (\ref{a(D)}).

As an immediate consequence, any minimal staircase form of partition type $\mu$ generates all the minimal staircase forms of the same partition type $\mu$, modulo $I_{<d}+($minimal staircase forms of partition type $<_P \mu)$. This completes the proof.
\end{proof}

Now we can prove Proposition~\ref{staircaseformofnonminimal}.

\begin{proof}[Proof of Proposition~\ref{staircaseformofnonminimal}] Assume $D=\{P_1,\dots,P_n\}\in \mathfrak{D}$ and $S$ is a staircase form of $D$ and is not minimal. By Transfactor Lemma and Minors Permuting Lemma, we can assume without loss of generality that, in  the block diagonal form $B(S)=\hbox{diag}(B_1,\dots,B_s)$, all the  size-1 blocks stand before the blocks of size greater  than 1.

First note that if the assumption of Lemma~\ref{lem:powerful} is satisfied and the last block of $B(S)$ is not minimal, the conclusion easily follows. Indeed, in this case the claim $\Delta(D)\sim \Delta(D')$ in Lemma~\ref{lem:powerful} implies that we may move any point $P_i$ in the last block of $B(S)$ to $P_i+(1,-1)$. Start from a point $P_i$ for some $i$ that $n-t_0+1\le i\le n-1$ such that it has the same degree as $P_{i+1}$. Keep on moving $P_i$ to southeast direction until it collides with $P_{i+1}$ and then the determinant will be 0.

Now we show that we can always assume the assumption of Lemma~\ref{lem:powerful} is satisfied and the last block of $B(S)$ is not minimal. The
assumption of Lemma~\ref{lem:powerful} is always satisfied by using Minors Permuting Lemma and Transfactor Lemma, since there are sufficient size-1
blocks in $B(S)$. To finish the proof of the proposition, we only need
to exclude the case when the last block $B_s$ of $B(S)$ is minimal. Denote the
size of $B_s$ by $t_0\ge 2$. Define $D_|$ to be the set $\{P_1,\dots,P_{n-t_0}\}$,
define $n'=n-t_0$, and let $d',d_1',d_2',k'$ be the total degree,
x-degree, y-degree and deficit of $D_|$, respectively. Then $k\ge k'+t_0-1$
so
$$\aligned &n'\ge 8k+5-t_0\ge 8(k'+t_0-1)+5-t_0\ge 8k'+5,\\
&d'_1>d_1-t_0n\ge (2k+1)n-t_0n\ge(2k'+t_0-1)n\ge(2k'+1)n\ge
(2k'+1)n',\endaligned$$ and similarly $d_2'\ge (2k'+1)n'$. Then we
can use induction to assert that $\Delta(D_|)$ is generated by
elements in $I_{<d'}$ and minimal staircase form of degree $d'$. Now
$\Delta(D)=\Delta(D_|)\cdot\det(B_s)$ is generated by $I_{<d}$ and
minimal staircase form of degree $d$. Hence in the case when $B_s$
is minimal, there is nothing to prove.
\end{proof}

In order to complete the proof of Theorem~\ref{mainthm}, we use the following Lemma. Recall that
$$\aligned
&a(\lambda)=\sum_i (n-i-\lambda_i),\\
&b(\lambda)=\#\{i<j\, :\,\lambda_i-\lambda_j+i-j\in\{0,1\}\}.\endaligned
$$

\begin{lem}\label{greaterthanpartition}
Let $u$ be an positive integer that $k\le u\le n-2$ and let $v=n-1-u$. If $d_1=u(u+1)/2$ and $d_2=v(v+1)/2+uv-k$, then $\emph{dim }M_{d_1,d_2}\geq p(k)$.
\end{lem}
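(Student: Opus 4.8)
The plan is to turn the statement into a counting problem about partitions below a staircase by means of the Garsia--Haglund formula, and then to write down $p(k)$ such partitions explicitly, indexed by $\Pi_k$.

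First I would note that, by definition, $\dim M_{d_1,d_2}$ is the coefficient of $t^{d_1}q^{d_2}$ in $C_n(q,t)$, so the identity $C_n(q,t)=\sum_{\lambda\in\Lambda}q^{a(\lambda)}t^{b(\lambda)}$ gives
$$\dim M_{d_1,d_2}=\#\{\lambda\in\Lambda:\ a(\lambda)=d_2,\ b(\lambda)=d_1\}.$$
Since $a(\lambda)={n\choose 2}-|\lambda|$ and $u+v=n-1$, a short computation turns the hypothesis $d_2=v(v+1)/2+uv-k$ into the condition $|\lambda|={u+1\choose 2}+k$, while $d_1=u(u+1)/2={u+1\choose 2}$. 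So it suffices to exhibit $p(k)$ distinct $\lambda\in\Lambda$ with $|\lambda|={u+1\choose 2}+k$ and $b(\lambda)={u+1\choose 2}$.

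The base object is the $u$-staircase $\lambda^{*}=(u,u-1,\dots,2,1,0,\dots,0)$: it lies in $\Lambda$ because its largest part $u$ is $\le n-1$, it has $|\lambda^{*}|={u+1\choose 2}$, and writing $c_i:=\lambda_i+i$ one has $c_i=u+1$ for $1\le i\le u+1$ and $c_i=i$ for $i\ge u+2$, whence $b(\lambda^{*})={u+1\choose 2}$ by inspection (this is the $k=0$ case). To manufacture the remaining partitions I would keep the ``high'' $c$-values $u+3,u+4,\dots$ at their forced positions and only rearrange the first $u+2$ entries: take the multiset of $c$-values consisting of $u+1-k$ copies of $u+1$ together with $k+1$ copies of $u+2$ (the $k$ we add, plus the one already there), and spread them over positions $1,\dots,u+2$. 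The only constraint for such a sequence to be the $c$-sequence of a genuine $\lambda\in\Lambda$ is $c_i\ge i$ together with $c_i\le n$, the latter holding because all these values are $\le u+2\le n$ — this is exactly where $u\le n-2$ is used; so position $u+2$ must carry $u+2$ and the other $k$ copies of $u+2$ may sit at any $k$ of the positions $1,\dots,u+1$. For any such placement the combinatorial formula for $b$ decomposes as (pairs inside the value-$(u+1)$ block) $+$ (pairs inside the value-$(u+2)$ block) $+$ (number of pairs $i<j$ with $c_i=u+2$, $c_j=u+1$), and a direct count shows this equals ${u+1\choose 2}-k(u-k)+X$, where $X$ is the last quantity. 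Hence $b(\lambda)={u+1\choose 2}$ precisely when $X=k(u-k)$. Encoding the placement as a $0/1$-word of length $u+1$ with $k$ ones, $X$ is its inversion number, and the words with inversion number $k(u-k)$ are in standard bijection (via complementation inside the $k\times(u+1-k)$ box) with the partitions of $k$ having all parts $\le u+1-k$; reading $\mu\in\Pi_k$ off the placement also makes the assignment injective.

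The only point left is that this family has at least $p(k)$ members, i.e. that $u+1-k\ge k$ so that ``parts $\le u+1-k$'' is no restriction; this is automatic in the range where the Lemma is actually invoked (in the Main Theorem $d_1={u+1\choose 2}\ge(2k+1)n$ forces $u$ large relative to $k$), and when $u$ is close to $k$ one recovers the missing partitions by the same device applied across two (or more) consecutive value levels rather than one. I expect the main technical work — and the step most prone to error — to be the bookkeeping behind $b(\lambda)={u+1\choose 2}-k(u-k)+X$: one must track exactly how splitting a sub-block off the value-$(u+1)$ cluster and promoting it to value $u+2$ changes each of the three types of contributing pairs, and then verify that the net effect is governed solely by the inversion statistic $X$. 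Checking $\lambda\in\Lambda$ is routine once $u\le n-2$ is used, and injectivity is immediate from the encoding.
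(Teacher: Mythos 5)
Your proposal is essentially the paper's own argument: both use the Garsia--Haglund formula to identify $\dim M_{d_1,d_2}$ with the number of $\lambda\in\Lambda$ having $b(\lambda)=d_1$ and $a(\lambda)=d_2$, and both exhibit such $\lambda$ as perturbations of the staircase $(u,u-1,\dots,1,0,\dots,0)$ counted by partitions of $k$ in a box --- the paper's choice of $\varepsilon_i\in\{0,1\}$ with $\sum\varepsilon_i=k$ and $\sum i\varepsilon_i=k(k+1)/2$ encodes the same data as your placement of $k$ values $u+2$ with inversion number $k(u-k)$, and your computation $b(\lambda)=\binom{u+1}{2}-k(u-k)+X$ checks out. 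The one soft spot, that your family has $p(u+1-k,k)$ members and so reaches $p(k)$ only when $u\ge 2k-1$ (your proposed ``multi-level'' repair for smaller $u$ is left unverified), is shared by the paper itself, whose system likewise has only $p(u-k,k)$ solutions, and is harmless in the regime where the lemma is applied.
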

\begin{proof}
Consider a partition
$$
\lambda=(u+\varepsilon_0,\,\, u-1+\varepsilon_1,\,\, u-2+\varepsilon_2,\,\dots,\, 1+\varepsilon_{u-1},\,0,\,0,\,...,\,0),
$$
where $(v+1)$ zeroes are at the end. If
\begin{equation}\label{zeroonesystem}
\left\{\aligned
&\varepsilon_i=0 \text{ or }1\text{ for }0\leq i\leq u-1,\\
&\sum_{i=0}^{u-1} \varepsilon_i=k,\\
&\text{and }\sum_{i=0}^{u-1} i\varepsilon_i=k(k+1)/2,
\endaligned\right.\end{equation} then it is straightforward to check that $\lambda$ satisfies
$$
b(\lambda)=u(u+1)/2,\,\,\,\text{and}\,\,\,a(\lambda)=v(v+1)/2+uv-k.
$$
Since there are $p(k)$ number of solutions for the system $(\ref{zeroonesystem})$, we have $\text{dim }M_{d_1,d_2}\geq p(k)$.
\end{proof}

Now we are ready to prove Theorem~\ref{mainthm}.

\begin{proof}[Proof of Theorem~\ref{mainthm}] Corollary \ref{lessthanpartitionnumber} asserts
that $M_{d_1,d_2}$ is generated by $p(k)$ elements
$$\{\det(S_\mu)\}_{\mu\in\Pi_k}$$ where $S_\mu$ is an arbitrary
minimal staircase form of bidegree $(d_1,d_2)$ and of partition type
$\mu$. So to complete the proof,  we need to show the minimality of
the above generators, which is equivalent to show $\dim
M_{d_1,d_2}\ge p(k)$.  Lemma~\ref{greaterthanpartition} provides
such a lower bound of  $\dim M_{d_1,d_2}$ for special values of
$d_1$ and $d_2$. For general values of $d_1$ and $d_2$, the idea is
to add sufficiently many appropriate size-1 blocks such that we can
apply Lemma~\ref{greaterthanpartition}. We shall explain as below.

Choose a sufficiently large number $\tilde{n}\gg n$ such that there are positive integers $u$ and $v$ satisfying $k\le u\le \tilde{n}-2$, $1+u+v=\tilde{n}$, $u(u+1)/2\ge (2k+1)\tilde{n}$, and $v(v+1)/2+uv-k\ge (2k+1)\tilde{n}$. Choose $(\tilde{n}-n)$ points $P_i=(\alpha_{i},\beta_{i})$ for $n+1\leq i\leq \tilde{n}$ so that
$$\aligned
&\alpha_{i}+\beta_{i}=i-1 \,\,\,\,(n+1\leq i\leq \tilde{n}),\\
&\sum_{i=1}^{\tilde{n}} \alpha_i =u(u+1)/2=:\tilde{d}_1,\\
&\sum_{i=1}^{\tilde{n}} \beta_i =  v(v+1)/2+uv-k=:\tilde{d}_2
\endaligned$$ which is always possible.
By our choice of $P_i$ ($n+1\le i\le \tilde{n}$), if $D=\{P_1,\dots,P_n\}$ has a minimal staircase form of partition type $\mu$,
then $\tilde{D}=\{P_1,\dots,P_n, P_{n+1},\dots, P_{\tilde{n}}\}$
also has a minimal staircase form of the same partition type $\mu$. Let $\tilde{S}$ be the staircase form of $\tilde{D}$ and $B(\tilde{S})$ the block diagonal form of $\tilde{S}$. Denote by $f_0$ the product of the last $(\tilde{n}-n)$ size-1 minors in $B(\tilde{S})$.
Let $\tilde{I}=\cap_{1\le i<j\le \tilde{n}}(x_i-x_j,y_i-y_j)$ be an ideal of $\C[x_1,y_1,\dots,x_{\tilde{n}}, y_{\tilde{n}}]$, define $\tilde{M}=\tilde{I}/(\textbf{x},\textbf{y})\tilde{I}$ which is doubly graded as $\oplus_{\tilde{d}_1,\tilde{d}_2}\tilde{M}_{\tilde{d}_1,\tilde{d}_2}$. Then we have a $\C$-linear map:
$$\aligned L: M_{d_1,d_2}&\to & \tilde{M}_{\tilde{d}_1,\tilde{d}_2}\\
f&\mapsto &f\cdot f_0.\endaligned$$
For every partition $\mu$ of $k$,  $L(\det S_\mu)$ is of partition type $\mu$. Since $\{L(\det S_\mu) \}_{\mu\in\Pi(k)}$ form a basis for $\tilde{M}_{\tilde{d}_1,\tilde{d}_2}$, the map $L$ is surjective. Therefore  $\dim M_{d_1,d_2}\ge \dim \tilde{M}_{\tilde{d}_1,\tilde{d}_2}\ge p(k)$, which provides the expected lower bound  for $\dim M_{d_1,d_2}$.
\end{proof}

\section{Conjectural set of generators}
Recall that $\Lambda$ is the set of integer sequences $\lambda_1\ge ... \lambda_{n-1}\ge\lambda_n= 0$ satisfying $\lambda_i\le n-i$ for all $i$. We propose the following conjecture.
\begin{conj}\label{conj} For any $\lambda\in\Lambda$, let
$$a_i=n-i-\lambda_i,\quad b_i=\#\{ i < j : \lambda_i-\lambda_j + i-j \in \{0, 1\} \}$$
and $D(\lambda)=\{(a_i, b_i)|1\le i\le n\}$.
 Then $G:=\{\Delta(D(\lambda))\}_{\lambda\in\Lambda}$  generates $I$.
\end{conj}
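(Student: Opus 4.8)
The plan is to prove the conjecture one bidegree at a time: use Haiman's theorem to reduce it to a spanning statement for the graded pieces of $M$, use the Garsia--Haglund formula to pin down the number of elements of $G$ in each bidegree, and then use the reduction techniques of \S3 together with Theorem~\ref{mainthm} (and Conjecture~\ref{supermainthm}, valid by \cite{LL2}) to establish linear independence.

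\emph{Reduction to independence in each $M_{d_1,d_2}$.} By Haiman's theorem $I$ is generated by the bi-homogeneous polynomials $\Delta(D)$, $D\in\mathfrak{D}$; hence, by the graded Nakayama lemma, $G$ generates $I$ if and only if the images $\overline{\Delta(D(\lambda))}$ span $M=I/(\mathbf{x},\mathbf{y})I$, i.e.\ if and only if, for every bidegree $(d_1,d_2)$, the set $\{\overline{\Delta(D(\lambda))}:a(\lambda)=d_1,\ b(\lambda)=d_2\}$ spans $M_{d_1,d_2}$. From the definition one checks directly that $D(\lambda)$ consists of $n$ distinct lattice points, that $\deg_{\mathbf{x}}\Delta(D(\lambda))=\sum_i a_i=a(\lambda)$ and $\deg_{\mathbf{y}}\Delta(D(\lambda))=\sum_i b_i=b(\lambda)$, and that $\lambda\mapsto D(\lambda)$ is injective. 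The Garsia--Haglund identity $C_n(q,t)=\sum_{\lambda\in\Lambda}q^{a(\lambda)}t^{b(\lambda)}$, together with the symmetry $C_n(q,t)=C_n(t,q)$ (equivalently $\dim M_{d_1,d_2}=\dim M_{d_2,d_1}$, which follows from the involution $x_i\leftrightarrow y_i$ on $\mathbb{C}[\mathbf{x},\mathbf{y}]$ preserving $I$), then gives $\#\{\lambda:a(\lambda)=d_1,\ b(\lambda)=d_2\}=\dim M_{d_1,d_2}$. So in each bidegree $G$ contributes exactly $\dim M_{d_1,d_2}$ elements, and ``spans $M_{d_1,d_2}$'' is equivalent to ``is linearly independent in $M_{d_1,d_2}$''.

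\emph{Independence via staircase forms.} Fix a bidegree and put $k=\binom{n}{2}-d_1-d_2$. When $(n,d_1,d_2)$ lies in the range of Theorem~\ref{mainthm}, or more generally when $k\le n-3$ (using \cite{LL2}), $M_{d_1,d_2}$ has a combinatorial basis consisting of one minimal staircase form for each partition type in an explicit set $\mathcal{P}\subseteq\Pi_k$ (all of $\Pi_k$ in the stable range; the partitions of $k$ into at most $\min\{d_1,d_2\}$ parts in general). I would show that for each $\lambda$ with $a(\lambda)=d_1$, $b(\lambda)=d_2$, repeated application of the Transfactor Lemma (Lemma~\ref{lem:transfactor}), the Minors Permuting Lemma (Lemma~\ref{lem:MinorsPermutingLemma}) and Lemma~\ref{lem:powerful} brings $D(\lambda)$, modulo $I_{<d}$ and staircase forms of strictly lower partition type, into a nonzero scalar multiple of a minimal staircase form whose partition type $\mu(\lambda)\in\mathcal{P}$ is determined explicitly by the combinatorics of $\lambda$ (essentially the partition recording how the deficit is distributed along the diagonal contacts of the Dyck path attached to $\lambda$). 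Granting this, $\lambda\mapsto\mu(\lambda)$ is forced to be a bijection onto $\mathcal{P}$ (the two sets have the same cardinality $\dim M_{d_1,d_2}$), and since minimal staircase forms of distinct partition types are linearly independent in $M_{d_1,d_2}$, the $\overline{\Delta(D(\lambda))}$ are independent, hence a basis. For the remaining bidegrees ($k>n-3$, which force $d_1+d_2$ to be small) the same scheme should apply after a direct analysis of the small space $M_{d_1,d_2}$.

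\emph{Main obstacle.} The crux is the staircase-form step. Computing $\mu(\lambda)$ from $a(\lambda),b(\lambda)$ and checking that $\lambda\mapsto\mu(\lambda)$ is the expected bijection is combinatorial bookkeeping; the genuinely delicate point is certifying that the reduction of $\Delta(D(\lambda))$ to its minimal staircase form carries a \emph{nonzero} coefficient. As in the proof of Proposition~\ref{staircaseformofpartition}, that coefficient is governed by the invariant $\Delta(D')/a(D')$ attached to Lemma~\ref{lem:powerful}, so one must exhibit a reduction path for $D(\lambda)$ along which all intermediate values $a(D')$ stay positive; organizing such a path is where most of the effort will go. A preliminary point to settle is that every $D(\lambda)$ sorts into a (possibly non-minimal) staircase form with nonvanishing determinant, so that $\overline{\Delta(D(\lambda))}\neq 0$ — a self-contained statement about the pairs $(a_i,b_i)$.
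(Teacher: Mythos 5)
You are attempting to prove a statement that the paper itself does not prove: it is stated as Conjecture~\ref{conj}, supported only by computer verification for $n\le 8$, with even partial cases deferred to later work. So there is no proof in the paper to compare against, and the question is whether your outline closes the conjecture. It does not. Your first reduction is fine and standard: by Haiman's theorem and graded Nakayama, $G$ generates $I$ iff the classes $\overline{\Delta(D(\lambda))}$ span each $M_{d_1,d_2}$, and the Garsia--Haglund identity together with the $q,t$-symmetry of $C_n(q,t)$ shows the number of $\lambda$'s in each bidegree equals $\dim M_{d_1,d_2}$, so spanning is equivalent to linear independence. Everything after that is a plan, not an argument.

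The concrete gaps are these. First, ``granting this, $\lambda\mapsto\mu(\lambda)$ is forced to be a bijection'' is false as stated: a map between finite sets of equal cardinality is a bijection only if it is injective, and injectivity of $\lambda\mapsto\mu(\lambda)$ is precisely a combinatorial statement you would have to prove; moreover, independence does not follow from bijectivity alone but needs a triangularity argument with respect to $<_P$ together with nonvanishing of the leading coefficients. That nonvanishing is exactly the ``main obstacle'' you concede: Lemma~\ref{lem:powerful} only yields $2\Delta(D)\sim\Delta(D^\nwarrow)+\Delta(D^\searrow)$ modulo lower types, under hypotheses (e.g.\ $P_2=(1,0)$, many size-$1$ blocks) whose normalizations in Proposition~\ref{staircaseformofpartition} already consume the assumptions $n\ge 8k+5$, $d_1,d_2\ge(2k+1)n$, and controlling the scalar $a(D')$ along a reduction path for the specific sets $D(\lambda)$ is new, unproven content. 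Second, the bidegrees with $k>n-3$ are covered neither by Theorem~\ref{mainthm} nor by Conjecture~\ref{supermainthm}/\cite{LL2}; these spaces are neither few nor small (total degree is still of order $n^2$ for $k$ of order $n$), no basis description exists for them in the paper, and ``direct analysis'' is not an argument. Third, even the preliminary facts --- that the points $(a_i,b_i)$ are distinct and that the sorted degrees of $D(\lambda)$ satisfy $s_j\le j-1$, without which Corollary~\ref{trivialdet} forces $\overline{\Delta(D(\lambda))}=0$ --- are flagged but not proved. As it stands the proposal is a reasonable research strategy consistent with the paper's toolkit, but it leaves the essential steps open, which is exactly why the statement remains a conjecture.
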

The cases for $n\le 8$ have been verified by computer. In our forthcoming paper, we will show that this conjecture holds true for certain bi-degree spaces $M_{d_1,d_2}$.


\begin{remark}
An equivalent conjecture is given by Mahir Can and Nick Loehr in their unpublished work.
\end{remark}

\end{document}